\title{On the self-intersections of curves deep in the lower central series of a surface group}
\author{Justin Malestein, Andrew Putman}
\date{November 30, 2009}
\theoremstyle{plain}
\newtheorem{theorem}{Theorem}[section]
\newtheorem{lemma}[theorem]{Lemma}
\newtheorem{corollary}[theorem]{Corollary}
\newcommand\BeginCases{\setcounter{case}{0}}
\theoremstyle{definition}
\newtheorem{definition}[theorem]{Definition}
\newtheorem{case}{Case}
\theoremstyle{remark}
\newtheorem*{remark}{Remark}
\newtheorem*{acknowledgements}{Acknowledgments}
\DeclareMathOperator{\Ker}{ker}
\newcommand\R{\text{$\mathbb{R}$}}
\newcommand\Z{\text{$\mathbb{Z}$}}
\newcommand\N{\text{$\mathbb{N}$}}
\DeclareMathOperator{\HH}{H}
\DeclareMathOperator{\Min}{min}
\newcommand\Th{\text{th}}
\DeclareMathOperator{\Interior}{Int}
\newcommand\CaptionSpace{\hspace{0.2in}}
\newcommand\Figure[3]{
\begin{figure}[t]
\centering
\centerline{\psfig{file=#2,scale=60}}
\caption{#3}
\label{#1}
\end{figure}}
\newcommand\LCS[2]{\text{$\gamma_{#1}(#2)$}}
\newcommand\DER[2]{\text{$#2^{(#1)}$}}
\newcommand\DERR[2]{\text{$(#2)^{(#1)}$}}
\newcommand\LCSNorm[2]{\text{$m_{\text{lcs}}(#2,#1)$}}
\newcommand\DERNorm[2]{\text{$m_{\text{der}}(#2,#1)$}}
\newcommand\Length[2]{\text{$\| #2 \|_{#1}$}}
\begin{document}

\maketitle

\begin{abstract}
We give various estimates of the minimal number of self-intersections of a nontrivial
element of the $k^{\text{th}}$ term of the lower central series and derived series of the fundamental
group of a surface.  As an application, we obtain a new topological proof of the fact that
free groups and fundamental groups of closed surfaces are residually nilpotent.  Along the
way, we prove that a nontrivial element of the $k^{\text{th}}$ term of the lower central series
of a nonabelian free group has to have word length at least $k$ in a free generating set.
\end{abstract}

\section{Introduction}

Fix an orientable surface $\Sigma$.  The goal of this paper is to quantify the extent to
which algebraically complicated elements of $\pi_1(\Sigma)$ must exhibit topological complexity.  

We begin with some definitions.  Let $c : S^1 \rightarrow \Sigma$ be a closed curve.  We define
the {\em self-intersection number} of $c$, denoted $i(c)$, to be minimum over all curves $c'$ which are freely homotopic
to $c$ of the quantity 
$$\frac{1}{2}|\{\text{$(x,y)$ $|$ $x,y \in S^1$, $x \neq y$, $c'(x)=c'(y)$}\}|.$$
The factor $1/2$ appears because each self-intersection is counted twice.  Also, recall that if $G$ is a 
group, then the {\em lower central series} of $G$ is the inductively defined
sequence
$$\LCS{1}{G} = G \quad \text{and} \quad \LCS{k+1}{G} = [\LCS{k}{G},G].$$
For examples of curves in $\LCS{j}{\pi_1(\Sigma)}$ together with their self-intersection
numbers, see Figure \ref{figure:examples}.

\Figure{figure:examples}{Examples}{
a,b. $z = [x,y] \in \LCS{2}{\pi_1(\Sigma)}$.  Also, $i(z)=0$. \CaptionSpace
c,d. $z = [x,y] \in \LCS{3}{\pi_1(\Sigma)}$ since $x \in \LCS{2}{\pi_1(\Sigma)}$.  Also, $i(z)=3$. \CaptionSpace
e,f. $z = [x,y] \in \LCS{4}{\pi_1(\Sigma)}$ since $x,y \in \LCS{2}{\pi_1(\Sigma)}$ (see Lemma \ref{lemma:threesubgroups}).
Also, $i(z)=3$.} 

If $\pi_1(\Sigma)$ is nonabelian, then it is easy
to see that for $k \geq 1$, there exist $x \in \LCS{k}{\pi_1(\Sigma)}$ with $i(x)$ arbitrarily large.  
However, a consequence of Theorems \ref{theorem:lcsbdry} and \ref{theorem:lcsgeneral} below is that
there do not exist nontrivial $x \in \LCS{k}{\pi_1(\Sigma)}$ with $i(x)$ arbitrarily small.

To state these theorems, we define
$$\LCSNorm{k}{\Sigma} = \Min\{\text{$i(x)$ $|$ $x \in \LCS{k}{\pi_1(\Sigma)}$, $x \neq 1$}\}.$$
Our first result is the following.
\begin{theorem}
\label{theorem:lcsbdry} 
Let $\Sigma_{g,b}$ be a orientable genus $g$ surface with $b \geq 1$ boundary components.  Assume that
$\pi_1(\Sigma_{g,b})$ is nonabelian.  Then for all $k \geq 1$ we have
$$\LCSNorm{k}{\Sigma_{g,b}} \geq \frac{k}{4g+b-1} - 1.$$
\end{theorem}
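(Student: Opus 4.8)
The plan is to reduce the geometric statement to a purely algebraic one about word length in free groups. Since $\Sigma_{g,b}$ has nonempty boundary, $\pi_1(\Sigma_{g,b})$ is a free group $F$ of rank $n = 2g+b-1$. The key bridge between topology and algebra is the following: if $c$ is a closed curve on $\Sigma_{g,b}$ with self-intersection number $i(c)$, then $c$ lies in a subsurface $\Sigma'$ that deformation retracts onto a wedge of at most roughly $i(c)$ circles — intuitively, realize $c$ by a curve with $i(c)$ transverse double points, take a regular neighborhood $N$ of the image, and observe that $N$ is a surface whose Euler characteristic is controlled by $i(c)$ (each double point contributes a bounded amount). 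So $\pi_1(N)$ is free of rank $O(i(c))$, and under the inclusion $\pi_1(N) \hookrightarrow \pi_1(\Sigma_{g,b})$, the element $[c]$ lies in the image.

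Next I would invoke (or prove, as it is advertised in the abstract) the key algebraic lemma: \emph{a nontrivial element of $\LCS{k}{F'}$ for $F'$ a free group has word length at least $k$ in any free basis of $F'$}. Equivalently, one shows that the image of $F'$ in the free nilpotent quotient $F'/\LCS{k}{F'}$ is faithful on the ball of radius $k-1$; this follows from the standard fact that $\LCS{k}{F'}/\LCS{k+1}{F'}$ is detected by the degree-$k$ part of the free Lie algebra, together with the Magnus embedding of $F'$ into the ring of noncommutative power series (an element of $\LCS{k}{F'}$ has Magnus expansion $1 + (\text{terms of degree} \geq k)$, so it cannot be a product of fewer than $k$ generators). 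Combining this with the previous paragraph: if $x \in \LCS{k}{\pi_1(\Sigma_{g,b})}$ is nontrivial with $i(x) = m$, then $x$ lies in a free subgroup $\pi_1(N)$ of rank $r = O(m)$, where it still lies in $\LCS{k}{\pi_1(N)}$ (lower central series is functorial and a retract keeps it in the subgroup — here I must be slightly careful: I want $x$ in the $k$-th term of the lower central series \emph{of the subgroup}, which requires that $\pi_1(N) \hookrightarrow \pi_1(\Sigma_{g,b})$ is injective on the relevant LCS quotients; since subgroups of free groups are free and $\pi_1(N) \to \pi_1(\Sigma_{g,b})$ is split injective onto a free factor's... no — it is just injective, but that suffices because I only need $x \neq 1$ in $\pi_1(N)$ and then apply the lemma inside $\pi_1(N)$ after noting $x \in \LCS{k}{\pi_1(N)}$, which does hold since a curve realizing $x$ with $m$ self-intersections, pushed into $N$, represents an element of $\LCS{k}$ of $\pi_1$ of its own neighborhood by the same homotopy). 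Then the lemma forces $k \leq \|x\|_{\pi_1(N)}$, and a curve with $m$ double points in $N$ has combinatorial length $O(m)$ in the free basis of $\pi_1(N)$ coming from the wedge structure, giving $k \leq C \cdot m$ with $C$ depending on the genus and boundary, which rearranges to the stated bound after tracking constants.

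The main obstacle — and where the real work lies — is making the Euler-characteristic bookkeeping in the first paragraph precise enough to extract the \emph{exact} constant $4g+b-1$ (note this is $2(2g+b-1) - (b-1)$-ish, and differs from the naive rank $2g+b-1$, so the counting must be done with care, presumably by cutting $\Sigma_{g,b}$ along arcs or using a specific spine and counting how a curve with $m$ self-intersections interacts with it), together with controlling the word length of $x$ in terms of $m$ rather than merely $O(m)$. A clean way to organize this is: fix a spine $\Gamma$ of $\Sigma_{g,b}$ with $1$ vertex and $2g+b-1$ edges (a wedge of circles), homotope $c$ to be transverse to the dual arcs, and bound the number of edge-crossings of $c$ by something linear in $i(c)$ with the precise slope $4g+b-1$; then $\|x\| \leq (4g+b-1)(i(x)+1)$ and the algebraic lemma gives $k \leq (4g+b-1)(i(x)+1)$, i.e. $i(x) \geq \frac{k}{4g+b-1} - 1$. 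I expect the geometric lemma relating crossing number to self-intersection number to be the technical heart, with the algebraic lemma being standard Magnus-expansion material.
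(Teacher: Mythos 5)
Your proposal contains a genuine gap that the paper's argument is specifically designed to avoid.

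The central problem is in your second paragraph, where you pass to the regular neighborhood $N$ of the immersed curve and assert that $x \in \LCS{k}{\pi_1(N)}$. This is false in general, and the failure is not a technicality. Take $\Sigma = \Sigma_{1,1}$ (once-punctured torus) with $\pi_1 = F_2 = \langle a,b\rangle$, and let $f$ be a simple closed curve freely homotopic to $[a,b]$ (a separating curve parallel to the boundary). Then $i(f) = 0$, so $N$ is an annulus and $\pi_1(N) \cong \Z$; thus $\LCS{2}{\pi_1(N)} = 1$, while $[a,b]$ is nontrivial in $\pi_1(N)$. Yet $[a,b] \in \LCS{2}{\pi_1(\Sigma)}$. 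So the element lies in $\LCS{2}$ of the ambient group but \emph{not} in $\LCS{2}$ of the subgroup $\pi_1(N)$, even though the inclusion is injective. The commutators witnessing membership in $\LCS{k}{\pi_1(\Sigma)}$ live in $\Sigma$, not in $N$, and there is no homotopy that forces them into $N$. Your parenthetical ``which does hold since \ldots by the same homotopy'' is precisely the step with no justification. Note also that $\pi_1(N) \hookrightarrow \pi_1(\Sigma)$ is generally \emph{not} an inclusion of a free factor (in the example above, $\langle[a,b]\rangle$ is not a free factor of $F_2$, since $[a,b]$ is not primitive), so there is no way to extend a basis of $\pi_1(N)$ to one of $\pi_1(\Sigma)$ and argue through a retraction either.

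The ``clean way'' sketched in your third paragraph also does not work as written: if you fix the spine $\Gamma$ in advance, a \emph{simple} closed curve (so $i(c)=0$) can cross the dual arcs arbitrarily many times, so there is no bound of the form $\|x\| \leq (4g+b-1)(i(x)+1)$ with a spine chosen independently of $c$. The paper's key move is exactly the opposite of fixing the generating set first: it ``combs'' the $i(f)$ double points of the immersed representative into a single point $\ast$, producing a wedge $G'$ of $i(f)+1$ circles embedded in $\Sigma$, and uses these circles (completed to a maximal system $S$ of simple closed curves through $\ast$) as a \emph{curve-dependent} generating set in which $w$ has length $\leq i(f)+1$. The fixed price $4g+b-1$ then comes from a second, purely topological step: cutting $\Sigma_{g,b}$ along $S$ produces (after an Euler characteristic count) a $(4g+b)$-gon $P$ with annuli on the unpaired sides, from which one extracts a free basis $S' \subset S$ and checks that every element of $S$ is a word of length $\leq 4g+b-1$ in $S'$. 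The Fox-type lower bound (Theorem~\ref{theorem:fox}) is then applied to the full free group $\pi_1(\Sigma_{g,b})$ with basis $S'$, where membership of $w$ in $\LCS{k}$ is known, never to a proper subgroup. Your algebraic ingredient (word length $\geq k$ in $\LCS{k}$ of a free group, via the Magnus expansion) is the right one and matches the paper's Theorem~\ref{theorem:fox}, but you must apply it to the ambient group, and the bridge to word length must come from the combing construction rather than from passing to a regular neighborhood.
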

\noindent 
Theorem \ref{theorem:lcsbdry} will be proven in \S \ref{section:lcsbdry}.

The key to our proof of Theorem \ref{theorem:lcsbdry} is the following result, which is proven in \S
\ref{section:fox}.  If $G$ is a group and $S \subset G$, then
for $x \in \langle S \rangle$ we will denote by $\Length{S}{x}$ the length of the 
shortest word in $S \cup S^{-1}$ which equals $x$.
\begin{theorem}
\label{theorem:fox} 
Let $F(S)$ be the free group on a set $S$ with $|S| > 1$ and let $k \geq 1$.  Then for all
non-trivial $w \in \LCS{k}{F(S)}$ we have $k \leq \Length{S}{w}$.
\end{theorem}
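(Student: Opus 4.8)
The plan is to deduce the theorem from the Magnus embedding. Let $R$ be the ring of formal power series over $\Z$ in the non-commuting variables $X_s$ ($s \in S$), where a monomial $X_{s_1} \cdots X_{s_j}$ is declared to have degree $j$, and let $\mathfrak{m} \subseteq R$ be the two-sided ideal of power series with vanishing constant term. Thus $\mathfrak{m}^j$ is the set of power series all of whose monomials have degree $\geq j$, and each $1 + \mathfrak{m}^j$ is a subgroup of the unit group $R^{\times}$. Since $1 + X_s$ is a unit (with inverse $\sum_{i \geq 0} (-1)^i X_s^i$), there is a homomorphism $\mu \colon F(S) \to R^{\times}$ with $\mu(s) = 1 + X_s$ for $s \in S$; this is the Magnus homomorphism.

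First I would record the ``easy half'' of Magnus's theorem: $\mu(\LCS{k}{F(S)}) \subseteq 1 + \mathfrak{m}^k$ for all $k \geq 1$. This is proved by induction on $k$. The case $k = 1$ holds because $\mu(s^{\pm 1}) - 1 \in \mathfrak{m}$ and $1 + \mathfrak{m}$ is a subgroup of $R^{\times}$. For the inductive step, observe that if $A = 1 + \alpha \in 1 + \mathfrak{m}^i$ and $B = 1 + \beta \in 1 + \mathfrak{m}^j$, then $AB - BA = \alpha\beta - \beta\alpha \in \mathfrak{m}^{i+j}$, so $[A,B] - 1 = (AB - BA)A^{-1}B^{-1} \in \mathfrak{m}^{i+j}$; since $\LCS{k+1}{F(S)} = [\LCS{k}{F(S)}, F(S)]$ and $1 + \mathfrak{m}^{k+1}$ is a subgroup of $R^{\times}$, this yields $\mu(\LCS{k+1}{F(S)}) \subseteq 1 + \mathfrak{m}^{k+1}$.

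The heart of the argument is the complementary estimate: if $w \in F(S)$ is nontrivial and $p$ is the number of syllables of $w$ — that is, writing the reduced word for $w$ as $w = y_1^{n_1} y_2^{n_2} \cdots y_p^{n_p}$ with $y_i \in S$, each $n_i \in \Z \setminus \{0\}$, and $y_i \neq y_{i+1}$ for $1 \leq i < p$ — then $\mu(w) \notin 1 + \mathfrak{m}^{p+1}$, and moreover $p \leq \Length{S}{w}$. To see the first point, I would compute the coefficient of the degree-$p$ monomial $X_{y_1} X_{y_2} \cdots X_{y_p}$ in $\mu(w) = \prod_{i=1}^{p} (1 + X_{y_i})^{n_i}$. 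Expanding the product, a contribution to this monomial is a choice of a subset $T \subseteq \{1, \dots, p\}$ together with exponents $d_i \geq 1$ for $i \in T$ with $\sum_{i \in T} d_i = p$ such that the word $\prod_{i \in T} X_{y_i}^{d_i}$ (factors in increasing order of $i$) is literally $X_{y_1} X_{y_2} \cdots X_{y_p}$; such a term contributes $\prod_{i \in T} \binom{n_i}{d_i}$. Matching letters from the left, the smallest element $i_1$ of $T$ must have $y_{i_1} = y_1$, and since $y_1 \neq y_2$ it must have $d_{i_1} = 1$; peeling off letters one at a time in this way forces every $d_i$ to be $1$, hence $T = \{1, \dots, p\}$. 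So there is exactly one contribution, of value $\prod_{i=1}^{p} \binom{n_i}{1} = \prod_{i=1}^{p} n_i \neq 0$, and this monomial genuinely occurs in $\mu(w) - 1$.

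Combining the two estimates finishes the proof: for nontrivial $w \in \LCS{k}{F(S)}$, the first gives $\mu(w) - 1 \in \mathfrak{m}^k$, so every monomial occurring in $\mu(w) - 1$ has degree $\geq k$, while the second exhibits a monomial of degree $p$ occurring in $\mu(w) - 1$; therefore $k \leq p \leq \Length{S}{w}$. (This argument in fact bounds $k$ by the number of syllables of $w$, and never uses $|S| > 1$.) I expect the only real obstacle to be the coefficient computation above — namely, ruling out cancellation among contributions to the monomial $X_{y_1} \cdots X_{y_p}$ — and the crucial point there is precisely that passing from the letter-by-letter reduced word to the syllable decomposition (so that $y_i \neq y_{i+1}$) prevents, say, a run $s^{-2}$ from feeding this monomial through both the $(-X_s)(-X_s)$ term and the $+X_s^2$ term.
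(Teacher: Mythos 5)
Your proof is correct, and it takes a route that is cosmetically different from the paper's but closely related under the hood. You work with the Magnus homomorphism $\mu \colon F(S) \to \Z\langle\langle X_s : s \in S \rangle\rangle^{\times}$, whereas the paper uses the Fox free differential calculus; the bridge between them is that the Fox derivatives compute the coefficients of the Magnus expansion, so the coefficient of $X_{s_1}\cdots X_{s_j}$ in $\mu(w)$ equals $\varepsilon(D_{s_1}\cdots D_{s_j}(w))$. Concretely, your containment $\mu(\LCS{k}{F(S)}) \subseteq 1 + \mathfrak{m}^k$ plays exactly the role that Lemma~\ref{lemma:freelcs} (cited from Chen--Fox--Lyndon) plays in the paper, and your extraction of the coefficient of $X_{y_1}\cdots X_{y_p}$ is the Magnus-side translation of the paper's inductive identity \eqref{eqn:biggoal}; both boil down to the same nonzero product $\prod_i n_i$, and both hinge on passing to the syllable decomposition (adjacent syllables on distinct generators) so that the cross terms vanish. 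What your version buys is self-containment: you derive the Magnus inclusion from scratch via the two-line commutator identity $[A,B]-1 = (AB - BA)A^{-1}B^{-1}$, while the paper outsources the analogous fact to \cite{ChenFoxLyndon}. Your side remark that the hypothesis $|S| > 1$ is never used is also accurate; for $|S|=1$ the statement is vacuous when $k \geq 2$ and trivial when $k = 1$.
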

\noindent
This improves upon work of Fox, who in \cite[Lemma 4.2]{FoxFree} proved a result
that implies that $\Length{S}{w} \geq \frac{1}{2} k$.

\begin{remark}
If we could prove an analogue of Theorem \ref{theorem:fox} for fundamental groups of closed surfaces, then we could also
prove an analogue of Theorem \ref{theorem:lcsbdry} for closed surfaces.
\end{remark}

\begin{remark}
We conjecture that Theorem \ref{theorem:fox} is not sharp.  Indeed,
we suspect that the length of the shortest word in the $k^{\Th}$ term of the lower central series
of a nonabelian free group is quadratic in $k$.  As evidence, in the proofs of
the upper bounds of Theorems \ref{theorem:lcsgeneral} and \ref{theorem:dergeneral} below
we will construct elements lying in the $k^{\Th}$ term of the lower central series
of a rank $2$ free group whose word length is quadratic in $k$.  If this conjecture
were true, then we could replace the lower bound in Theorem \ref{theorem:lcsbdry} with
a function which is quadratic in $k$.
\end{remark}

For general surfaces (not necessarily compact or of finite type), we prove the following.
\begin{theorem}
\label{theorem:lcsgeneral} 
Let $\Sigma$ be an orientable surface with $\pi_1(\Sigma)$ nonabelian.  Then for $k \geq 1$ we have
$$\log_8 (k) - 1 \leq \LCSNorm{k}{\Sigma} \leq 8 k^4.$$
\end{theorem}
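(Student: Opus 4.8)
\Heading{Upper bound}
The plan is to produce the element inside a small subsurface. Since $\pi_1(\Sigma)$ is nonabelian, $\Sigma$ contains a $\pi_1$-injectively embedded once-punctured torus (or pair of pants), so $\pi_1(\Sigma)$ contains the fundamental group $F_2$ of this subsurface. Homotopies in the subsurface are homotopies in $\Sigma$, so the self-intersection number of a curve carried by $\Sigma_{1,1}$ is at most the one computed inside $\Sigma_{1,1}$; $\pi_1$-injectivity keeps essential curves essential; and $\gamma_k(\pi_1\Sigma_{1,1})$ maps into $\gamma_k(\pi_1\Sigma)$. Hence it suffices to exhibit a nontrivial $w \in \gamma_k(F_2)$ whose length in a free basis $\{a,b\}$ is $O(k^2)$ and to realize $w$ by an immersed curve on $\Sigma_{1,1}$ with $\lesssim \|w\|^2 \leq 8k^4$ self-intersections; the latter is routine --- draw $a$ and $b$ as the two core curves of $\Sigma_{1,1}$ and route the syllables of $w$ monotonically, so that each pair of syllables contributes a bounded number of crossings. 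To build $w$ I would use a balanced recursion: starting from the generators, repeatedly replace a pair of already-constructed \emph{independent} elements $u,v \in \gamma_j(F_2)$ by $[u,v] \in \gamma_{2j}(F_2)$, which doubles lower-central-series depth while multiplying word length by only a bounded factor, with the remaining depth increments made by bracketing with a generator; done carefully this reaches depth $k$ at word length $O(k^2)$. Nontriviality of $w$ is verified by exhibiting a nonzero image of $w$ in $\gamma_k(F_2)/\gamma_{k+1}(F_2)$, the degree-$k$ part of the free Lie ring on $\{a,b\}$, which one arranges by choosing the elements bracketed at each stage so that the resulting iterated Lie bracket does not collapse. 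Optimizing the constants gives $8k^4$.

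\Heading{Lower bound: setup}
One must show that a nontrivial $x \in \gamma_k(\pi_1\Sigma)$ with $i(x) = n$ forces $k \leq 8^{n+1}$. Let $c'$ realize $i(x) = n$, in general position, so its image is a $4$-valent graph $\Gamma$ with $n$ vertices, hence with first Betti number $n+1$, and $c'$ is an Eulerian circuit in $\Gamma$. A regular neighborhood $N$ of $\Gamma$ is a compact surface with boundary, so $\pi_1(N)$ is free of rank at most $n+1$, and $x$ is the image under $\iota_* \colon \pi_1(N) \to \pi_1(\Sigma)$ of an element $w$ of length at most $n+1$ in the free basis of $\pi_1(N)$ associated to a spanning tree of $\Gamma$. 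If $\iota_*$ were injective with a left inverse $\rho \colon \pi_1(\Sigma) \to \pi_1(N)$, we would be done at once: $w = \rho(x) \in \rho(\gamma_k(\pi_1\Sigma)) \subseteq \gamma_k(\pi_1 N)$, whence $k \leq \|w\| \leq n+1$ by Theorem \ref{theorem:fox}. The lower bound is obtained by salvaging this argument.

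\Heading{Lower bound: the salvage and the obstacle}
Two things can fail. First, $\iota_*$ need not be injective: a boundary curve of $N$ may bound a disk in $\Sigma$, as for a figure eight drawn in a disk. This is repaired by compressing $N$ --- filling in such disks and discarding disk and sphere components --- to get an essential subsurface $\hat N$ with $c' \subseteq \hat N \subseteq \Sigma$, with $\pi_1(\hat N)$ still free of rank at most $n+1$, with $\pi_1(\hat N) \hookrightarrow \pi_1(\Sigma)$ injective, and (using $x \neq 1$) with $x$ the image of a nontrivial $\hat w$ of controlled length. Second, even an injective $\iota_*$ need not admit a retraction; to produce one, at the cost of passing to a finite cover, one invokes separability of finitely generated subgroups (M. Hall's theorem for free groups, Scott's theorem for surface groups): $\pi_1(\hat N)$ is a free factor of a finite-index subgroup of $\pi_1(\Sigma)$, which supplies the desired retraction after restriction to that subgroup. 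The main obstacle is the bookkeeping: one must bound the index of this subgroup, and the length of $\hat w$ after compression, purely in terms of $n$, and it is precisely this quantitative cost --- compression, then passage to a cover --- that converts the essentially linear estimate above into the logarithm $\log_8(k) - 1$. The case of closed $\Sigma$ needs extra care, since then $\pi_1(\Sigma)$ is not free and Theorem \ref{theorem:fox} does not apply directly: here one observes that $c'$ either fails to fill $\Sigma$, and hence lies in a proper essential subsurface with free fundamental group, or else fills a closed surface, which --- since the complementary disks must account for the Euler characteristic $2-2g$ --- forces genus at most $(n+1)/2$, a bounded-genus situation one treats on its own. Incidentally, the quadratic-length elements of $\gamma_k(F_2)$ constructed for the upper bound show that this logarithmic lower bound is far from sharp.
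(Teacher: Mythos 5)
Your upper bound plan is essentially the paper's: embed $F_2$ via an essential subsurface, construct a nontrivial element of $\gamma_k(F_2)$ with word length roughly $k^2$ by repeatedly bracketing elements deep in the filtration against each other (the paper does this via balanced iterated commutators $x_k = [x_{k-1},y_{k-1}]$, $y_k = [x_{k-1},y_{k-1}^{-1}]$, lands in $D^{(l)}$, and then applies $D^{(l)}(G)\subset \gamma_{2^{l-1}}(G)$), and realize such a word by a curve with $\leq \binom{\text{length}}{2}$ self-intersections. Two spots where your sketch is looser than it should be: your nontriviality argument appeals to ``arranging'' that an iterated Lie bracket does not collapse in the free Lie ring, which is precisely the delicate point and needs a concrete verification --- the paper instead proves by induction that $x_k, y_k$ always generate a rank-two free subgroup, which is cleaner; and ``route the syllables monotonically'' needs the hypothesis that the $a,b$ curves meet only at the basepoint, as in the paper's Lemma \ref{lemma:wordlength}.

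For the lower bound your approach is genuinely different from the paper's, and it has a gap that is conceptual rather than bookkeeping. Your plan is: put $x$ on a compressed essential subsurface $\hat N$ with $\pi_1(\hat N)$ free of controlled rank, use subgroup separability to obtain a finite-index $\Gamma \leq \pi_1(\Sigma)$ with a retraction $\rho\colon \Gamma \to \pi_1(\hat N)$, and then invoke Theorem \ref{theorem:fox} on $\rho(x) = x$. But $\rho$ only carries $\gamma_j(\Gamma)$ into $\gamma_j(\pi_1\hat N)$, and you are given $x \in \gamma_k(\pi_1\Sigma) \cap \Gamma$, not $x \in \gamma_k(\Gamma)$. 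These are very different: the lower central series does \emph{not} restrict well to finite-index subgroups. Already for $G = F_2 = \langle a,b\rangle$ and $\Gamma$ the index-two kernel of $G \to \mathbb{Z}/2$ sending both generators to $1$, the element $[a,b]$ lies in $\gamma_2(G) \cap \Gamma$ but has nontrivial image in $\Gamma^{\mathrm{ab}}$, hence is not in $\gamma_2(\Gamma)$. So even granting a retraction and perfect control of its index, you have not shown that $x$ is deep in the lower central series of the free group to which Theorem \ref{theorem:fox} applies, and the argument does not yield a bound on $k$. The paper sidesteps this entirely: it never uses Theorem \ref{theorem:fox} for this lower bound (it explicitly notes Corollary \ref{corollary:residuallynilpotent} does not depend on it). Instead, Lemma \ref{lemma:coveringlemma} produces a subnormal chain of degree-$8$ covers of length $\leq i(f)+1$ past which $x$ does not lift; Lemma \ref{lemma:grouptheory} passes to a normal subgroup $H \lhd \pi_1(\Sigma)$ of $2$-power index $\leq 2^{8^{i(f)+1}}$ with $x \notin H$; and Lemma \ref{lemma:pgroups} then gives $\gamma_{8^{i(f)+1}}(\pi_1\Sigma) \subseteq H$, whence $k \leq 8^{i(f)+1}$. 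The crucial structural point is that a finite $p$-quotient of bounded order forces a bound on nilpotency class of the quotient and therefore on how deep $x$ can sit in $\gamma_\bullet(\pi_1\Sigma)$ itself --- no retraction and no passage to the lower central series of a subgroup is needed. If you want to rescue your retraction idea you would need some replacement for the missing inclusion $\gamma_k(\pi_1\Sigma)\cap\Gamma \subseteq \gamma_{?}(\Gamma)$, and the natural route to such a statement (normal core, $p$-group quotient) essentially reproduces the paper's argument and makes the retraction superfluous.

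Finally, your remark that an honest retraction from all of $\pi_1(\Sigma)$ would give a linear bound $k \leq n+1$ is correct and is exactly the content of the paper's Theorem \ref{theorem:lcsbdry} for compact surfaces with boundary, where the retraction exists; the paper's introductory remark about needing a closed-surface analogue of Theorem \ref{theorem:fox} is precisely the obstruction you ran into for general $\Sigma$.
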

\noindent
The proof of the lower bound in Theorem \ref{theorem:lcsgeneral} is
in \S \ref{section:lcsgenerallower} and the proof of the upper bound
is in \S \ref{section:upperbounds}.

\begin{remark}
Although the lower bound in Theorem \ref{theorem:lcsgeneral} is weaker than the lower bound in Theorem
\ref{theorem:lcsbdry} in terms of the order of $k$, it is uniform over all surfaces.
\end{remark}

Recall that a group $G$ is {\em residually nilpotent} if $\cap_{k=1}^{\infty} \LCS{k}{G} = 1$.  Our proof of Theorem
\ref{theorem:fox} is an elaboration of a proof due to Fox \cite{FoxFree} of a theorem of Magnus \cite{MagnusFree}
that says that free groups are residually nilpotent.  Conversely, an immediate consequence of Theorem
\ref{theorem:lcsgeneral} (which does not use Theorem \ref{theorem:fox}) is the following theorem, which for surface
groups is due independently to Baumslag \cite{BaumslagSurfaces} and Frederick \cite{FrederickSurfaces}.
\begin{corollary}
\label{corollary:residuallynilpotent}
Free groups and fundamental groups of closed surfaces are both residually nilpotent.
\end{corollary}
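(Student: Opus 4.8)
The plan is to derive this as an essentially immediate consequence of the lower bound in Theorem \ref{theorem:lcsgeneral}; in particular the proof will not use Theorem \ref{theorem:fox}. Let $G$ be either a free group or the fundamental group of a closed orientable surface. We must show $\bigcap_{k=1}^{\infty} \LCS{k}{G} = 1$. First I would dispose of the case that $G$ is abelian: the abelian free groups are $1$ and $\Z$, and the only closed orientable surface with nontrivial abelian fundamental group is the torus, for which $G \cong \Z^2$. In each of these cases $\LCS{2}{G} = 1$, so the intersection is trivial and $G$ is (trivially) residually nilpotent.

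So assume $G$ is nonabelian. The key point is that $G$ is then isomorphic to $\pi_1(\Sigma)$ for some orientable surface $\Sigma$ with $\pi_1(\Sigma)$ nonabelian. Indeed, if $G \cong F(S)$ is free of rank $|S| = n \geq 2$, we may take $\Sigma$ to be a sphere with $n+1$ open disks removed, so that $\pi_1(\Sigma) \cong F(S)$; and if $G$ is the fundamental group of a closed orientable surface, that surface has genus $\geq 2$ (genus $0$ gives the trivial group and genus $1$ gives $\Z^2$), so we simply take $\Sigma$ to be that surface. Now suppose, for the sake of contradiction, that there exists $x \in \bigcap_{k=1}^{\infty} \LCS{k}{G}$ with $x \neq 1$. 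Represent the free homotopy class of $x$ by a smooth closed curve in general position; it has some finite number of self-intersections, so $i(x) < \infty$. On the other hand, since $x$ is a nontrivial element of $\LCS{k}{\pi_1(\Sigma)}$ for every $k \geq 1$, the definition of $\LCSNorm{k}{\Sigma}$ together with the lower bound in Theorem \ref{theorem:lcsgeneral} gives
$$i(x) \geq \LCSNorm{k}{\Sigma} \geq \log_8(k) - 1 \qquad \text{for all } k \geq 1.$$
Letting $k \to \infty$ forces $i(x) = \infty$, a contradiction. Hence no such $x$ exists, and $\bigcap_{k=1}^{\infty}\LCS{k}{G} = 1$, as desired.

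I do not expect any serious obstacle here: the substance of the argument is carried entirely by Theorem \ref{theorem:lcsgeneral}. The only points requiring (minor) care are the routine identifications of free groups and closed surface groups with fundamental groups of suitable orientable surfaces having nonabelian fundamental group, and the elementary observation that $i(x)$ is a finite nonnegative integer for any fixed element $x$, so that it cannot dominate the unbounded quantity $\log_8(k) - 1$.
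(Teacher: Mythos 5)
Your argument is correct and is essentially the proof the paper has in mind: the corollary is stated there precisely as an ``immediate consequence'' of the lower bound in Theorem \ref{theorem:lcsgeneral}, and your write-up simply fills in the routine details (disposing of the abelian cases, realizing $G$ as the fundamental group of an orientable surface with nonabelian $\pi_1$, observing that $i(x)$ is a finite integer, and letting $k \to \infty$). One small remark: as written, your reduction handles free groups of finite rank only. This suffices for the spirit of the statement, and the general case follows by a standard observation --- any nontrivial element $x$ of a free group $F$ lies in a finitely generated free factor $F'$, and since the retraction $F \to F'$ carries $\LCS{k}{F}$ into $\LCS{k}{F'}$, residual nilpotence of $F'$ forces $x \notin \LCS{k}{F}$ for some $k$. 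You may want to include that one-line remark to cover free groups of arbitrary rank.
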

\noindent
Our proof of Theorem \ref{theorem:lcsgeneral} (and hence of Corollary \ref{corollary:residuallynilpotent})
shares some ideas with Hempel's beautiful short proof \cite{Hempel} of the residual finiteness of free
groups and surface groups.

The final result of this paper gives an analogue of Theorem \ref{theorem:lcsgeneral} for the derived
series.  Recall that if $G$ is a group, then the {\em derived series} of $G$ is
the inductively defined sequence
$$\DER{1}{G} = G \quad \text{and} \quad \DER{k+1}{G} = [\DER{k}{G},\DER{k}{G}].$$
Setting
$$\DERNorm{k}{\Sigma} = \Min\{\text{$i(x)$ $|$ $x \in \DERR{k}{\pi_1(\Sigma)}$, $x \neq 1$}\},$$
our result is as follows.
\begin{theorem}
\label{theorem:dergeneral} 
Let $\Sigma$ be an orientable surface with $\pi_1(\Sigma)$ nonabelian.  Then for $k \geq 3$ we have
$$2^{\lceil k/2 \rceil - 2} \leq \DERNorm{k}{\Sigma} \leq 2^{4k-5}.$$
\end{theorem}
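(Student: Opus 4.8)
The strategy for both bounds is to reduce questions about $x$ to the combinatorics of a curve $c$ realizing it. Recall that an immersed closed curve $c$ with $n$ self-intersections has image a $4$-valent graph with $n$ vertices, and, when $c$ is in minimal position, bounds no monogon or bigon; for each self-intersection $p$ there is a loop $\ell_p$ running along $c$ from one branch at $p$ to the other. (The relevant free-group fact, from Theorem \ref{theorem:fox} together with $\DER{j}{F}\subseteq\LCS{2^{j-1}}{F}$, is that a nontrivial $w\in\DER{j}{F}$ with $F$ free has $\Length{S}{w}\geq 2^{j-1}$ in any free basis; this is the derived-series shadow of Theorem \ref{theorem:fox} and explains the exponential shape of the estimate, although the argument below for the lower bound proceeds through covering spaces rather than invoking it directly.)

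For the lower bound, let $x\in\DER{k}{\pi_1(\Sigma)}$ be nontrivial, $k\geq 3$, $n:=i(x)$, and let $c$ realize $x$ in minimal position. First suppose $n=0$; then $c$ is an essential simple closed curve. If $x$ is not null-homologous it is already not in $\DER{2}{\pi_1(\Sigma)}$, and if it is null-homologous then one checks directly that its image in the Alexander module $\DER{2}{\pi_1(\Sigma)}/\DER{3}{\pi_1(\Sigma)}$ is nonzero; either way $x\notin\DER{3}{\pi_1(\Sigma)}$, contradicting $k\geq 3$. So assume $n\geq 1$. The argument rests on two facts. (a) If $\Sigma'\to\Sigma$ is a finite \emph{abelian} cover, then $\DER{2}{\pi_1(\Sigma)}\subseteq\pi_1(\Sigma')$, so (since $\DER{k}{\pi_1(\Sigma)}$ is the $(k-1)$th derived subgroup of $\DER{2}{\pi_1(\Sigma)}$) we get $\DER{k}{\pi_1(\Sigma)}\subseteq\DER{k-1}{\pi_1(\Sigma')}$: each abelian cover lowers the derived depth of $x$ by exactly one. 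Iterating, in a tower of $t$ abelian covers with $t\leq k-2$, the curve $c$ lifts to a loop $c_t$ in $\Sigma_t$ representing $x$, and $x$ lies in the $(k-t)$th derived subgroup of $\pi_1(\Sigma_t)$. (b) There is such a tower, of length $t$ at most roughly $2\log_2 n$, for which $c_t$ is an \emph{embedded} curve; indeed a cover given by $\phi\colon\pi_1\to A$ (with $A$ finite abelian) separates the two branches at a self-intersection $p$ precisely when $\phi(\ell_p)\neq 1$, and by choosing the tower appropriately one can eventually arrange $\phi(\ell_p)\neq 1$ for all $p$ simultaneously. Combining these: if $t<k-2$ then by (a) $x$ lies in the $(k-t)$th derived subgroup of $\pi_1(\Sigma_t)$ with $k-t\geq 3$, contradicting the $n=0$ case applied to the simple curve $c_t$; hence the minimal length of such a tower is at least $k-2$, while by (b) it is at most $\approx 2\log_2 n$. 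Therefore $k\leq 2\log_2 n+O(1)$, i.e. $n\geq 2^{\lceil k/2\rceil-2}$ once the constants are fixed.

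The main obstacle is the tower length in (b). The difficulty is that $\ell_p$ may be null-homologous in the surface currently under consideration, in which case no abelian cover of it separates the branches at $p$; one must interleave covers that strictly reduce the self-intersection number of the current lift of $c$ with covers that raise the first Betti number enough to make the remaining $\ell_p$'s homologically visible, and keep the total number of covers down to $2\log_2 n+O(1)$ rather than something larger. This bound on the number of covers is the whole game, and is the source of the factor $\tfrac12$ in the exponent. (That $\Sigma_t$ may be a large or complicated surface is irrelevant, since the $n=0$ statement — no simple closed curve lies in the third derived subgroup — holds for every orientable surface.)

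For the upper bound we run this in reverse. Since $\pi_1(\Sigma)$ is nonabelian, $\Sigma$ contains an incompressible subsurface $S$ which is a one-holed torus (if $\Sigma$ has positive genus) or a three-holed sphere (if $\Sigma$ is planar); either way $\pi_1(S)$ is free of rank two on some $\{a,b\}$, $\pi_1(S)\hookrightarrow\pi_1(\Sigma)$, and a curve drawn in $S$ has no more self-intersections in $\Sigma$ than in $S$ and stays nontrivial. So it suffices to construct, for each $k\geq 3$, a nontrivial $w_k\in\DER{k}{\pi_1(S)}$ with $i_S(w_k)\leq 2^{4k-5}$. Take $w_2=[a,b]$ and inductively $w_{k+1}=[w_k,\,bw_kb^{-1}]$. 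Then $w_k\in\DER{k}{\pi_1(S)}$ is immediate, and $w_k\neq 1$ follows by induction since in a free group $[u,bub^{-1}]=1$ only when $u$ is a power of $b$, which $w_k$ is not (it lies in $\DER{2}{\pi_1(S)}$ and genuinely involves $a$). Counting the letters $a^{\pm1}$ and $b^{\pm1}$ in the obvious word for $w_k$ gives $\#_a(w_{k+1})=4\,\#_a(w_k)$ and $\#_b(w_{k+1})=4\,\#_b(w_k)+4$, so that $\#_a(w_k)\,\#_b(w_k)\leq 2^{4k-5}$; realizing $w_k$ on $S$ with its $a$- and $b$-arcs transverse and no two arcs of the same type crossing (which is possible on a one-holed torus or three-holed sphere) yields $i_S(w_k)\leq\#_a(w_k)\,\#_b(w_k)\leq 2^{4k-5}$. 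The only fiddly points are the exact constant in this estimate and checking that the iterated words can indeed be drawn on $S$ without superfluous crossings.
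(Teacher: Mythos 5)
Your upper bound is in the same spirit as the paper's (build an explicit iterated commutator of controlled length in a rank--$2$ free subgroup realized by simple curves, then bound self-intersections by a quadratic-in-word-length estimate), though you use a different word family $w_{k+1}=[w_k,bw_kb^{-1}]$ in place of the paper's double recursion $x_k=[x_{k-1},y_{k-1}]$, $y_k=[x_{k-1},y_{k-1}^{-1}]$, and you count $a$-arcs and $b$-arcs separately rather than invoking a uniform $\binom{n}{2}$ bound for $n$ arcs through a disc. Two caveats: your nontriviality argument for $w_k$ (``$[u,bub^{-1}]=1$ only when $u$ is a power of $b$'') is stated a little too loosely --- the correct statement is that $u$ and $b$ must be powers of a common element, which still suffices here since $w_k$ is in the commutator subgroup and $b$ is a free generator --- and the geometric claim that $w_k$ can be drawn with no two arcs of the same type crossing is asserted, not proved, and is exactly where a uniform lemma like the paper's (at most $\binom{n}{2}$ crossings among $n$ arcs meeting a basepoint disc) is safer.

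The lower bound, however, has a genuine gap, and you have flagged it yourself: you propose to construct a tower of finite \emph{abelian} covers of length $t\lesssim 2\log_2 n$ along which the lift of $c$ becomes embedded, but you do not construct it, and you explicitly note that $\ell_p$ may be null-homologous so that no single abelian cover sees it, leaving the bound on the tower length open. That bound is the whole content of the lower bound, so as written the argument does not close. The paper avoids this entirely: it does not build a tower of abelian covers at all. Instead, it takes the single (infinite, non-abelian) normal cover $\widetilde\Sigma\to\Sigma$ corresponding to the subgroup $\DERR{k-2}{\pi_1(\Sigma)}$, and proves directly that if $i(f)<2\cdot\DERNorm{k-2}{\Sigma}$ and $f$ lifts to a closed curve $\tilde f$, then $\tilde f$ is \emph{simple}: for any self-intersection pair $(x,y)$ of $f$, the two complementary arcs split the self-intersections, so one of them represents a closed curve with fewer than $\DERNorm{k-2}{\Sigma}$ self-intersections, hence (by the inductive hypothesis two steps down) is not in $\DERR{k-2}{\pi_1(\Sigma)}$, hence does not close up in $\widetilde\Sigma$ and that self-intersection is resolved. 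Combined with $\DERR{3}{\pi_1(\widetilde\Sigma)}=\DERR{k}{\pi_1(\Sigma)}$ and the fact that no essential simple closed curve lies in $\LCS{3}$ (the paper proves this for surfaces via an explicit dihedral degree-$8$ cover, rather than the Alexander-module argument you sketch, which also needs justification), this gives $\DERNorm{k}{\Sigma}\geq 2\cdot\DERNorm{k-2}{\Sigma}$ directly. In other words, the halving/doubling and the $k\mapsto k-2$ step are built into the induction itself, and the ``how many covers do I need'' question that blocks your version simply never arises.
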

\noindent
The lower bound in Theorem \ref{theorem:dergeneral} is proven in \S \ref{section:dergenerallower}
and the upper bound is proven in \S \ref{section:upperbounds}.  Our proof of the lower
bound in Theorem \ref{theorem:dergeneral} is inspired by an unpublished note of
Reznikov \cite{Reznikov}, which outlines an argument giving a linear
lower bound on $\DERNorm{k}{\Sigma}$ for $\Sigma$ closed.  We remark that though \cite{Reznikov}
seems to claim that it is dealing with the lower central series, both its definitions
and its arguments make it clear that the author intends to discuss the derived series.

\begin{remark}
In our definitions above, for $x \in \pi_1(\Sigma,\ast)$ the number $i(x)$ depends only on the free homotopy class of
$x$.  If we required that our homotopies fix $\ast$ and $\ast \in \Interior(\Sigma)$, then $i(x)$ would be unchanged.
If instead $\ast \in \partial \Sigma$, then $i(x)$ might differ. However, since the lower central series and derived
series are normal, requiring the homotopies to fix the basepoint would not change $\LCSNorm{k}{\Sigma}$ or
$\DERNorm{k}{\Sigma}$.
\end{remark}

\begin{acknowledgements}
We would like to thank
Khalid Bou-rabee, Nathan Broaddus, Matthew Day, Thomas Koberda, and Ben McReynolds 
for useful conversations and suggestions.  We would especially like to thank Benson
Farb for sharing \cite{Reznikov} with us and asking whether bounds of the sort we
prove might hold. 
\end{acknowledgements}

\section{Lower bounds}
\label{section:lowerbounds}

In this section, we prove the lower bounds in Theorems \ref{theorem:lcsbdry}, \ref{theorem:lcsgeneral},
and \ref{theorem:dergeneral}.

\subsection{Lower central series, compact surfaces with boundary}
\label{section:lcsbdry}

We begin with Theorem \ref{theorem:lcsbdry}.

\Figure{figure:lcspics}{LCSPics}{a. An immersed curve $f$ whose singularities consist of $i(f)=5$ isolated
double points.
\CaptionSpace b. The maximal tree $T$
\CaptionSpace c. The $2$-disc $D$
\CaptionSpace d. Result of contracting $D$}

\begin{proof}[{Proof of Theorem \ref{theorem:lcsbdry}}]
Let $f : S^1 \rightarrow \Interior(\Sigma_{g,b})$ be an immersion whose singularities consist of $i(f)$ isolated double
points (see Figure \ref{figure:lcspics}.a).  
Assume that $f$ is freely homotopic to a nontrivial element of $\LCS{k}{\pi_1(\Sigma_{g,b})}$.
Our goal is to show that $i(f) \geq \frac{k}{4g+b-1} - 1$.

The first step is to ``comb'' the double points to a single point on the surface.  The immersion $f$
factors through an embedding of a graph whose vertices correspond to the singularities of $f$.  More precisely,
there is a $4$-regular graph $G$ with $i(f)$ vertices, an embedding $\tilde{f} : G \rightarrow \Interior(\Sigma_{g,b})$, and
an immersion $c : S^1 \rightarrow G$ with $f = \tilde{f} \circ c$ such that the inverse image under $c$ of
the interior of every edge of $G$ is connected.  

Let $T$ be a maximal 
tree in $G$.  Hence $\tilde{f}(T)$ is an embedded tree in $\Interior(\Sigma_{g,b})$ (see
Figure \ref{figure:lcspics}.b).  Any sufficiently small closed neighborhood $D$
of $\tilde{f}(T)$ satisfies the following two properties (see Figure \ref{figure:lcspics}.c).
\begin{itemize}
\item $D$ is homeomorphic to a closed $2$-disc.
\item For all edges $e$ of $G$ that do not lie in $T$, 
the set $\tilde{f}(e) \cap D$ has exactly two connected components.
\end{itemize}
It is easy to see that there is a map
$r : \Sigma_{g,b} \rightarrow \Sigma_{g,b}$ such that $r$ is homotopic
to the identity, such that $r|_{\Sigma_{g,b} \setminus D}$ is injective, and such
that $r(D) = \ast$ for some point $\ast \in \Interior(\Sigma_{g,b})$.  Let $D' = \tilde{f}^{-1}(D)$.  By
construction, $D'$ is a closed regular neighborhood of $T$ in $G$.
Set $G' = G / D'$, so $G'$ is a wedge of circles, and let $c' : S^1 \rightarrow G'$ 
be the composition
of $c$ with the projection $G \rightarrow G/D'$.  There is then an embedding 
$\tilde{f}' : G' \rightarrow \Interior(\Sigma_{g,b})$
such that $\tilde{f}' \circ c' = r \circ f$ (see Figure \ref{figure:lcspics}.d).

Let $w \in \pi_1(\Sigma_{g,b},\ast)$ be the based curve corresponding to $\tilde{f}' \circ c'$.  Since $\tilde{f}' \circ c'$
is freely homotopic to $f$, we have $w \in \LCS{k}{\pi_1(\Sigma_{g,b},\ast)}$.  Let
$S \subset \pi_1(\Sigma_{g,b},\ast)$ be a maximal collection of elements satisfying the following three properties.
\begin{itemize}
\item For each circle $L$ in $G'$ with $\tilde{f}'|_{L}$ not null-homotopic, there 
exists some $x \in S$ such that $\tilde{f}'|_{L} = x^{\pm 1}$.
\item For $x,y \in S$, if $x = y^{\pm 1}$ then $x=y$.
\item The curves in $S$ can be realized simultaneously by simple closed curves that only intersect at $\ast$.
\end{itemize}
Since $G$ is a $4$-regular graph with $i(f)$ vertices, it has $2 i(f)$ edges.  
Also, the maximal tree $T$ has $i(f)$ vertices and hence $i(f)-1$ edges.  We conclude that
$G'$ is a wedge of $2 i(f) - (i(f)-1) = i(f)+1$ circles, so $\Length{S}{w} \leq i(f)+1$.

We will confuse the set of homotopy classes $S$ with the corresponding set of simple closed curves that only
intersect at $\ast$.  Via an Euler characteristic calculation, we see that cutting $\Sigma_{g,b}$ along the curves in
$S$ yields $b$ annuli and $4g+b-2$ triangles.  By gluing the triangles together in an appropriate manner (as
in the standard combinatorial proof of the classification of surfaces; see \cite[Chapter 1]{Massey}), we identify
$\Sigma_{g,b}$ with a $(4g+b)$-sided polygon $P$ with $4g$ sides identified in pairs, all vertices identified, and
annuli glued to the $b$ unpaired sides.  Each of the curves in $S$ is identified with either a side of $P$ or an arc
in $P$ joining two vertices.

In particular, $S$ contains a free generating set $S'$ for $\pi_1(\Sigma_{g,b},\ast)$ consisting of the following curves.
\begin{itemize}
\item A curve corresponding to one edge from each of the pairs in the $4g$ paired edges in $P$.
\item A curve corresponding to all but one of the $b$ unpaired edges in $P$.
\end{itemize}
Observe that every element of $S$ can be written as a word of length at most $4g+b-1$ in $S'$, so $\Length{S'}{w} \leq
(4g+b-1) \Length{S}{w}$.  Theorem \ref{theorem:fox} says that $k \leq \Length{S'}{w}$, so we conclude that
$$k \leq \Length{S'}{w} \leq (4g+b-1) \Length{S}{w} \leq (4g+b-1) (i(f)+1).$$
Rearranging this inequality gives the desired conclusion.
\end{proof}

\subsection{Some preliminary lemmas}
We now prove two lemmas that are needed in the proofs of Theorems \ref{theorem:lcsgeneral} and 
\ref{theorem:dergeneral}.

\begin{lemma}
\label{lemma:coveringlemma} 
Let $\Sigma$ be a compact orientable surface with $\pi_1(\Sigma)$ non-abelian and let 
$f : S^1 \rightarrow \Sigma$ be a non-nullhomotopic closed curve.  Then there exists 
a degree $8$ normal cover $\widetilde{\Sigma} \rightarrow \Sigma$ such that one of the following holds.
\begin{itemize}
\item $f$ does not lift to a closed curve on $\widetilde{\Sigma}$.
\item $f$ lifts to a closed curve $\tilde{f} : S^1 \rightarrow \widetilde{\Sigma}$ with
$i(\tilde{f}) < i(f)$.
\end{itemize}
\end{lemma}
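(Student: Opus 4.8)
\smallskip
\noindent\emph{Proof plan.} The plan is to put $f$ in minimal immersed position --- an immersion with exactly $i(f)$ transverse double points, which is possible since $i(f)$ is a minimum and a small perturbation of an optimal representative makes its singularities into isolated transverse double points without increasing their number. For any cover $q\colon\widetilde{\Sigma}\to\Sigma$ and any lift $\tilde{f}$ of $f$, a self-intersection $\tilde{f}(x)=\tilde{f}(y)$ forces $f(x)=f(y)$, so the double points of $\tilde{f}$ inject into those of $f$; hence $i(\tilde{f})\le i(f)$, and it suffices to find a cover in which $f$ fails to lift, or in which $f$ lifts but some double point of $f$ does not. Basing $\pi_1(\Sigma)$ at a double point $p$, the two preimages of $p$ split the based loop $f$ as $f=a_p b_p$ with $[a_p],[b_p]\in\pi_1(\Sigma,p)$; since $f$ is in minimal position each $[a_p]$ is nontrivial in $\pi_1(\Sigma)$ (a nullhomotopic $[a_p]$ would produce a monogon), and if $N\trianglelefteq\pi_1(\Sigma)$ defines the cover and $[f]\in N$, a monodromy computation shows that $p$ lifts to a double point of $\tilde{f}$ exactly when $[a_p]\in N$. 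Thus the lemma reduces to producing a normal subgroup $N$ of index $8$ with $[f]\notin N$ (then $f$ does not lift), or with $[f]\in N$ and $[a_p]\notin N$ for some $p$ (then $f$ lifts and $i(\tilde{f})\le i(f)-1$).

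To produce $N$ I would use the combinatorics from the proof of Theorem~\ref{theorem:lcsbdry}: $f$ factors through a $4$-regular graph $G$ with $i(f)$ vertices, embedded in a regular neighborhood $N_f\subset\Sigma$ of $f$, with $f$ traversing the interior of each of the $2i(f)$ edges exactly once. Collapsing a maximal tree, $\pi_1(N_f)$ is free of rank $i(f)+1$ on the non-tree edges, and since $f$ uses each edge once, $[f]$ is conjugate to a word using each of these free generators exactly once, while each $[a_p]$ is conjugate to a word using a subset of them each at most once. Hence every nontrivial one of $[f]$ and the $[a_p]$ is \emph{primitive} in the free group $\pi_1(N_f)$, so in particular nonzero in $\HH_1(N_f;\Z/2)$. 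If one such loop $w$ is also nonzero in $\HH_1(\Sigma;\Z/2)$, I would take $N$ to be the kernel of a surjection of $\pi_1(\Sigma)$ onto a $2$-group of order $8$ that is nontrivial on $w$ and, if $w\ne[f]$, trivial on $[f]$ --- an elementary abelian quotient of $\HH_1(\Sigma;\Z/2)$ when $b_1(\Sigma;\Z/2)\ge 3$, and a quotient such as $\Z/4\times\Z/2$, $D_4$, or $Q_8$ in the two remaining low-complexity cases.

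The hard case, which I expect to be the main obstacle, is when $[f]$ and all the $[a_p]$ vanish in $\HH_1(\Sigma;\Z/2)$. Then one must detect the relevant primitive loop $w$ one layer deeper, inside a quotient of nilpotency class $2$ and order $8$ --- a copy of $D_4$ or $Q_8$ --- using that $w$, being primitive in $\pi_1(N_f)$, is neither a proper power nor a product of commutators there. The difficulty is that $N_f\hookrightarrow\Sigma$ can collapse homology, so a priori $w$ might be pushed as deep as $\LCS{3}{\pi_1(\Sigma)}$, where \emph{no} order-$8$ quotient sees it; and since a lift of a minimally immersed curve is again minimally immersed, in that event no degree-$8$ cover would reduce $i(f)$, so this possibility genuinely has to be ruled out. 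I would rule it out by first replacing $\Sigma$ with the subsurface that $f$ fills, so that $N_f$ fills the surface and homology can be lost only through nullhomotopic boundary curves --- which a $D_4$ or $Q_8$ quotient still detects, just as an essential separating simple closed curve, though nullhomologous, does not lie in $\LCS{3}{\pi_1(\Sigma)}$ --- then transporting the resulting finite quotient back up to $\pi_1(\Sigma)$ via separability of free and surface subgroups of surface groups, and finally correcting the degree to exactly $8$ with an auxiliary index-$2$ or index-$4$ cover. Pushing this deep case through at fixed degree $8$ is where I expect the real work to lie.
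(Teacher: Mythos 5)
Your reduction is in the right spirit, and the observation that $D(\tilde f)\subset D(f)$, so that it suffices to find a degree-$8$ normal cover killing either $[f]$ or some subloop $[a_p]$, is exactly the structure of the paper's argument. But the route you then take has a genuine gap, and it is one the paper sidesteps with a simplification you did not find: \emph{choose the double point carefully so that the subloop is a simple closed curve}.

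Concretely, the paper partially orders the subarcs of $S^1$ with matching endpoints by inclusion and takes a \emph{minimal} one; minimality forces the resulting subcurve $f'$ to be embedded, and minimal position forces it to be essential (no monogons, as you note). The entire problem then reduces to: given an essential \emph{simple} closed curve on a compact surface, find a degree-$8$ normal cover to which it does not lift. This is handled directly. If the curve is non-nullhomologous, map $\HH_1(\Sigma;\Z)$ onto $\Z/8\Z$ so that the curve survives. If it is nullhomologous, then because it is simple one can choose a standard generating set so that it equals $[\alpha_1,\beta_1]\cdots[\alpha_{g'},\beta_{g'}]$, and the explicit surjection onto the dihedral group of order $8$ sending $\alpha_1\mapsto\sigma$, $\beta_1\mapsto r\sigma$ (and likewise at the top index, to preserve the surface relation in the closed case) sends it to $[\sigma,r\sigma]\neq 1$. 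That is all: no primitivity in $\pi_1(N_f)$, no restriction to the filling subsurface, no separability, no degree correction.

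The place your plan would break down is precisely the ``hard case'' you flag. You are right to worry that a nullhomologous subloop could a priori sit in $\LCS{3}{\pi_1(\Sigma)}$, and you are right that primitivity in the free group $\pi_1(N_f)$ is not, by itself, enough to rule this out once the inclusion $N_f\hookrightarrow\Sigma$ collapses homology. But the fix you sketch --- restrict to the filling subsurface, use LERF to transport a finite quotient upward, then patch the degree --- does not work at \emph{fixed} degree $8$: subgroup separability gives you \emph{some} finite-index subgroup, with no control over its index, and there is no general mechanism to then ``correct'' to index $8$ while keeping normality and keeping the relevant loop outside. The paper avoids all of this because it never needs to consider an arbitrary nullhomologous primitive element; it only ever needs the case of a nullhomologous \emph{simple} closed curve, which has the rigid algebraic form $[\alpha_1,\beta_1]\cdots[\alpha_{g'},\beta_{g'}]$ in suitable generators, and that rigidity is what the dihedral quotient exploits. (Indeed, that a simple non-nullhomotopic curve is never in $\LCS{3}$ is exactly the paper's Lemma~\ref{lemma:simpleclosedcurves}, and it is deduced \emph{from} this covering lemma, not used to prove it.) So the missing idea is the innermost-loop reduction; once you add it, your ``hard case'' disappears and the elementary abelian/dihedral dichotomy you already have in mind suffices.
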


\begin{remark}
Since the cover in the conclusion of Lemma \ref{lemma:coveringlemma} is normal, $f$ lifts to a closed curve if and
only if any curve freely homotopic to $f$ lifts to a closed curve.
\end{remark}

\begin{proof}[{Proof of Lemma \ref{lemma:coveringlemma}}]
By the remark following the lemma, we may assume without loss of generality 
that $f$ is an immersion whose singularities consist of $i(f)$ isolated double points.  There are two cases.

\BeginCases
\begin{case}
$f$ is simple.
\end{case}
We must construct a degree $8$ normal cover to which $f$ does not lift to a closed curve.  In other words, 
choosing $\ast \in f(S^1)$ and letting $x \in \pi_1(\Sigma,\ast)$ be the based curve corresponding to $f$,
we must find a finite group $H$ with $|H|=8$ and a surjection $\psi : \pi_1(\Sigma,\ast) \rightarrow H$ 
with $x \notin \Ker(\psi)$.

If $f$ is not nullhomologous and if $\phi : \pi_1(\Sigma,\ast) \rightarrow \HH_1(\Sigma;\Z)$ is the
abelianization map, then $\phi(x)$ is a primitive vector.  There is therefore a surjection 
$\phi' : \HH_1(\Sigma;\Z) \rightarrow \Z / 8\Z$ such that $\phi'(\phi(x)) \neq 0$.  We conclude that 
we can use $H = \Z / 8 \Z$ and $\psi = \phi' \circ \phi$.

Assume now that $f$ is nullhomologous.  Letting $g$ be the genus and $b$ the number of boundary
components of $\Sigma$, it follows that 
there is a generating set $S=\{\alpha_1,\beta_1,\ldots,\alpha_g,\beta_g,x_1,\ldots,x_b\}$
for $\pi_1(\Sigma,\ast)$ such that
$$\pi_1(\Sigma,\ast) = \langle \text{$\alpha_1,\beta_1,\ldots,\alpha_g,\beta_g,x_1,\ldots,x_b$ $|$ $[\alpha_1,\beta_1] \cdots [\alpha_g,\beta_g] = x_1 \cdots x_b$} \rangle$$
and such that $x = [\alpha_1,\beta_1] \cdots [\alpha_{g'},\beta_{g'}]$ for some $g' \leq g$.  Let $H$ be the
dihedral group of order $8$, so
$$H = \langle \text{$\sigma,r$ $|$ $\sigma^2=1$, $r^4=1$, $\sigma r \sigma = r^{-1}$} \rangle.$$
We define a surjection $\psi : \pi_1(\Sigma,\ast) \rightarrow H$ in the following way.  If $b = 0$,
then $g' < g$ and we define $\psi(\alpha_1) = \psi(\alpha_g) = \sigma$,
$\psi(\beta_1) = \psi(\beta_g) = r \sigma$, and $\psi(s) = 1$ for all $s \in S$ with
$x \notin \{\alpha_1,\beta_1,\alpha_g,\beta_g\}$.
It is easy to check that the surface group relation is satisfied and that the resulting homomorphism $\psi$
is a surjection.  If $b > 0$,
then $\pi_1(\Sigma,\ast)$ is free on $S \setminus \{x_b\}$.  We define $\psi(\alpha_1) = \sigma$,
$\psi(\beta_1) = r \sigma$, and $\psi(s) = 1$ for all $s \in S \setminus \{x_b\}$ with
$s \notin \{\alpha_1,\beta_1,\alpha_g,\beta_g\}$.  Trivially
$\psi$ extends to a surjection.  In either case, we have $\psi(x) = [\sigma, r \sigma] \neq 1$, as desired.

\begin{case}
$f$ is not simple.
\end{case}

\Figure{figure:lcspics2}{LCSPics2}{
\CaptionSpace a. A nonsimple closed curve $f$ like in Step 2 of the proof of Lemma \ref{lemma:coveringlemma}.  The
simple closed subcurve $f'$ is in bold.
\CaptionSpace b. An example of a subcurve $f'$ that is nullhomotopic.
\CaptionSpace c. We reduce the number of self-intersections of $f$.}

Let $A$ be the set of nontrivial proper subarcs of $S^1$ whose endpoints are mapped by $f$ to the same point of
$\Sigma$.  By assumption $A$ is finite and nonempty.  Partially order the elements of $A$ by inclusion and let
$\alpha$ be a minimal element with endpoints $a_1$ and $a_2$.  Since $\alpha \in A$, the map $f|_{\alpha} : \alpha
\rightarrow \Sigma$ factors through a map $f' : S^1 \rightarrow \Sigma$, and from the minimality of
$\alpha$ we deduce that $f'$ is a simple closed curve (see Figure \ref{figure:lcspics2}.a). In addition, $f'$ is not
nullhomotopic, since if $f'$ were nullhomotopic then we could homotope $f$ so as to decrease its number of
self-intersections (see Figures \ref{figure:lcspics2}.b--c).

By Case 1, there is a degree $8$ normal cover $\widetilde{\Sigma} \rightarrow \Sigma$ to which $f'$ does not
lift to a closed curve.  If $f$ does not lift to a closed curve on $\widetilde{\Sigma}$, then we are done.  Assume,
therefore, that $f$ can be lifted to a closed curve $\tilde{f} : S^1 \rightarrow \widetilde{\Sigma}$.  Define
\begin{align*}
D(f) = \{\text{$(x,y)$ $|$ $x,y \in S^1$, $x \neq y$, $f(x)=f(y)$}\},\\
D(\tilde{f}) = \{\text{$(x,y)$ $|$ $x,y \in S^1$, $x \neq y$, $\tilde{f}(x)=\tilde{f}(y)$}\}.
\end{align*}
We clearly have $D(\tilde{f}) \subset D(f)$.  Moreover, by construction $(a_1,a_2) \notin D(\tilde{f})$.  We conclude
that $\tilde{f}$ has fewer self-intersections than $f$, so $i(\tilde{f}) < i(f)$, as desired.
\end{proof}

We will also need the following simple lemma, which allows us to deduce results about noncompact surfaces
from results about compact surfaces.

\begin{lemma}
\label{lemma:reducetocompact}
Let $\Sigma$ be an oriented surface with $\pi_1(\Sigma)$ nonabelian.  Also, let $f: S^1 \rightarrow \Sigma$ 
be a non-nullhomotopic closed curve which is freely homotopic to an element of $\LCS{k}{\pi_1(\Sigma)}$ for some
$k \geq 1$.  Then there is a compact surface $\Sigma'$ with $\pi_1(\Sigma')$ nonabelian and an
embedding $i : \Sigma' \hookrightarrow \Sigma$ satisfying the following properties.
\begin{itemize}
\item There is a map $f' : S^1 \rightarrow \Sigma'$ such that $f = i \circ f$.
\item The curve $f'$ is freely homotopic to an element of $\LCS{k}{\pi_1(\Sigma')}$.
\end{itemize}
\end{lemma}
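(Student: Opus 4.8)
The plan is as follows. If $\Sigma$ is already compact there is nothing to prove: take $\Sigma' = \Sigma$, let $i$ be the identity, and let $f' = f$. So assume $\Sigma$ is non-compact. The idea is to build $\Sigma'$ as a compact, $\pi_1$-injective subsurface of $\Sigma$ that contains $f(S^1)$ together with enough auxiliary loops to witness the fact that the based class of $f$ lies in $\LCS{k}{\pi_1(\Sigma)}$.

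First I would pass to based loops. Choose $\ast \in f(S^1)$, so $f$ represents a class $w \in \pi_1(\Sigma,\ast)$. Since $\LCS{k}{\pi_1(\Sigma)}$ is characteristic and $f$ is freely homotopic to an element of it, $w$ itself lies in $\LCS{k}{\pi_1(\Sigma,\ast)}$. Since $\LCS{k}{}$ is generated by weight-$k$ left-normed commutators and commutes with directed unions of subgroups, there are finitely many elements $a_1,\dots,a_m \in \pi_1(\Sigma,\ast)$ and an expression for $w$ as a product of weight-$k$ commutators in the $a_j$. Fix loops $\ell_1,\dots,\ell_m$ based at $\ast$ representing $a_1,\dots,a_m$, and (using that $\pi_1(\Sigma)$ is nonabelian) two further loops $g,h$ based at $\ast$ with $[g,h] \neq 1$. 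Let $K \subseteq \Sigma$ be the compact union of $f(S^1)$ with all of these loops.

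Next I would produce a compact connected subsurface $\Sigma' \subseteq \Sigma$ with $K \subseteq \Sigma'$ and $\iota_* \colon \pi_1(\Sigma',\ast) \to \pi_1(\Sigma,\ast)$ injective. Start with any compact connected subsurface $\Sigma_0 \supseteq K$. If some component $\gamma$ of $\partial\Sigma_0$ bounds a disc $D$ in $\Sigma$, choose $D$ innermost, so $\Interior(D) \cap \partial\Sigma_0 = \emptyset$; then $\Interior(D) \cap \Sigma_0 = \emptyset$ (the alternative $\Interior(D) \subseteq \Interior(\Sigma_0)$ would force $\Sigma_0 = D$ to be a disc, making $f$ nullhomotopic, which is false), so we may enlarge $\Sigma_0$ to $\Sigma_0 \cup D$, which is again compact and connected, still contains $K$, and has one fewer boundary component. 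Iterating, we reach $\Sigma'$ no component of whose boundary bounds a disc in $\Sigma$. That $\pi_1(\Sigma') \to \pi_1(\Sigma)$ is then injective is the standard incompressibility criterion for subsurfaces: one can see it by endowing $\Sigma$ with a complete hyperbolic metric with geodesic boundary, isotoping $\partial\Sigma'$ to be geodesic, and observing that $\Sigma'$ then lifts to a convex, hence simply connected, subset of $\HBolic^2$.

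Finally I would push the commutator identity back up. Each of $w, a_1,\dots,a_m, [g,h]$ lies in $\Image(\iota_*)$ and so has a (unique, by injectivity) preimage in $\pi_1(\Sigma',\ast)$; in particular the preimage of $[g,h]$ is nontrivial, so $\pi_1(\Sigma')$ is nonabelian. Applying $\iota_*^{-1}$ to the expression of $w$ as a product of weight-$k$ commutators in the $a_j$ exhibits the preimage $\tilde w$ of $w$ as a product of weight-$k$ commutators in the preimages of the $a_j$, so $\tilde w \in \LCS{k}{\pi_1(\Sigma',\ast)}$. Since $f(S^1) \subseteq \Sigma'$, the map $f$ factors as $i \circ f'$ with $f' \colon S^1 \to \Sigma'$, and $f'$ represents $\tilde w$ up to conjugacy, hence is freely homotopic to an element of $\LCS{k}{\pi_1(\Sigma')}$, as required. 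The main obstacle is the third step: the disc-absorption reduction needs a little care with innermost discs and with the connectedness/non-disc bookkeeping, and the incompressibility criterion, though standard, genuinely requires an argument (either the hyperbolic one above or a minimal-position argument that breaks a putative kernel curve at a self-intersection). The remaining steps are routine.
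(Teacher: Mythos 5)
Your proof is correct, and it reaches the conclusion by a genuinely different mechanism than the paper intends. The paper's one-sentence proof (``Any iterated commutator only involves a finite number of curves and any homotopy stays within a compact subset of $\Sigma$'') takes the direct route: write $w$ as a product of weight-$k$ commutators, observe that the loops \emph{and} the nullhomotopy witnessing the identity all have compact image, and take any compact connected subsurface $\Sigma'$ containing this compact set (plus two loops with nontrivial commutator). Because the witnessing homotopy already lives in $\Sigma'$, the commutator identity holds verbatim in $\pi_1(\Sigma')$, so $\tilde{w}\in\LCS{k}{\pi_1(\Sigma')}$ with no appeal to injectivity of $\pi_1(\Sigma')\to\pi_1(\Sigma)$; and $[\tilde{g},\tilde{h}]\neq 1$ since it maps to $[g,h]\neq 1$. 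You instead include only the loops in $K$, then do extra work to make $\Sigma'$ incompressible (the disc-absorption step plus the hyperbolic-geodesic argument), and finally use injectivity of $\iota_*$ to transport the identity back to $\pi_1(\Sigma')$. Both arguments are valid, but yours buys nothing the paper's cheaper observation doesn't already give, at the cost of a nontrivial incompressibility lemma. If you simply enlarge $K$ to contain the image of the nullhomotopy, the entire disc-absorption/incompressibility portion of your argument can be deleted.
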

\begin{proof}
Any iterated commutator only involves a finite number of curves and any homotopy stays within a compact subset of
$\Sigma$.
\end{proof}

\subsection{Lower central series, general surfaces}
\label{section:lcsgenerallower}

We now prove the lower bound in Theorem \ref{theorem:lcsgeneral}.  The proof will require the following 
lemma.

\begin{lemma}
\label{lemma:grouptheory} Fix $p,n,m \geq 1$ with $p$ prime, and let $G_0 \rhd G_1 \rhd \cdots \rhd G_n$ be a
subnormal sequence of groups with $[G_{i-1}:G_i] = p^{m}$ for $1 \leq i \leq n$.  Then there exists some
group $H$ such that $H < G_n$, such that $H \lhd G_0$, and such that $[G_0 : H] = p^{N}$ for some $1 \leq N \leq m
\frac{p^{mn} - 1}{p^m - 1}$.
\end{lemma}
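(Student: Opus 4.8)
The proof will go by induction on $n$. The base case $n = 1$ is immediate: the sequence is $G_0 \rhd G_1$, and since $G_1 \lhd G_0$ with $[G_0 : G_1] = p^m$ we may take $H = G_1$ and $N = m$, observing that $m = m\frac{p^m - 1}{p^m - 1}$ and $m \geq 1$.

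For the inductive step I would first apply the inductive hypothesis to the length-$(n{-}1)$ subnormal sequence $G_1 \rhd G_2 \rhd \cdots \rhd G_n$, obtaining a subgroup $H_1 \leq G_n$ with $H_1 \lhd G_1$ and $[G_1 : H_1] = p^{N_1}$ for some $1 \leq N_1 \leq m\frac{p^{m(n-1)} - 1}{p^m - 1}$. Since $H_1$ need not be normal in $G_0$, I would replace it by its normal core $H := \bigcap_{g \in G_0} g H_1 g^{-1}$ in $G_0$; then $H \lhd G_0$ automatically, and $H \leq H_1 \leq G_n$ since the intersection includes the term $g = e$. Thus everything reduces to estimating $[G_0 : H]$.

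The heart of the matter is a chain of observations about the conjugates $g H_1 g^{-1}$ for $g \in G_0$. First, because $G_1 \lhd G_0$ each such conjugate lies in $G_1$, and conjugation by $g$ is an automorphism of $G_0$ preserving $G_1$, so $[G_1 : g H_1 g^{-1}] = [G_1 : H_1] = p^{N_1}$. Second, and crucially, each conjugate $g H_1 g^{-1}$ is still \emph{normal} in $G_1$: for $h \in G_1$ one has $g^{-1} h g \in G_1$ (as $G_1 \lhd G_0$), and $g^{-1} h g$ normalizes $H_1$ (as $H_1 \lhd G_1$), so $h$ normalizes $g H_1 g^{-1}$. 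Third, since $G_1 \leq N_{G_0}(H_1)$, the number $t$ of distinct conjugates equals $[G_0 : N_{G_0}(H_1)]$, which divides $[G_0 : G_1] = p^m$, so $t \leq p^m$. Writing $K_1, \dots, K_t$ for the distinct conjugates, we have $H = \bigcap_{i=1}^{t} K_i$ with each $K_i \lhd G_1$ of index $p^{N_1}$, so the natural map $G_1 \to \prod_{i=1}^{t} G_1 / K_i$ has kernel $H$ and exhibits $G_1 / H$ as a subgroup of a group of order $p^{N_1 t}$. Hence $[G_1 : H]$ is a power of $p$ with exponent at most $N_1 t \leq p^m N_1$, so $[G_0 : H] = p^m [G_1 : H] = p^N$ with
\[
N \;\leq\; m + p^m N_1 \;\leq\; m + p^m \cdot m\,\frac{p^{m(n-1)} - 1}{p^m - 1} \;=\; m\,\frac{p^{mn} - 1}{p^m - 1},
\]
while clearly $N \geq m \geq 1$. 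This completes the induction.

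The main obstacle is precisely this index estimate. The naive choice of $H$, namely the normal core of $G_n$ in $G_0$, comes only with the factorial bound $[G_0 : \mathrm{core}_{G_0}(G_n)] \mid (p^{mn})!$, which is hopeless. What makes the inductive construction succeed is that at each stage one takes the core of a subgroup of the \emph{already-normal} term $G_1$, so the number of conjugates one must intersect is governed by the single index $[G_0 : G_1] = p^m$ rather than by $[G_0 : G_n]$; and the observation that these conjugates remain normal in $G_1$ is what keeps every index a genuine power of $p$ and makes the exponent bound merely additive in each step.
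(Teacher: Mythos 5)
Your proposal is correct and follows essentially the same route as the paper: induction on $n$, applying the hypothesis to $G_1 \rhd \cdots \rhd G_n$ and then replacing the resulting $H_1 \lhd G_1$ by its normal core in $G_0$, with the index bounded using that only $[G_0:G_1]=p^m$ conjugates need be intersected. The paper isolates this last step as a separate lemma (Lemma~\ref{lemma:grouptheory2}) and bounds $[G_1:H]$ via a chain $C_1 \rhd C_2 \rhd \cdots$ and the second isomorphism theorem, whereas you embed $G_1/H$ into $\prod_i G_1/K_i$ directly; the content and the resulting bound $N \leq m + p^m N_1$ are identical.
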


\noindent For the proof of Lemma \ref{lemma:grouptheory}, we will need the following.

\begin{lemma}
\label{lemma:grouptheory2}
Fix $p,r,s \geq 1$ with $p$ prime, and let $A \rhd B \rhd C$ be groups with
$[A:B] = p^{r}$ and $[B:C] = p^{s}$.  Then there exists a group $D$ such that $D < C$, such that
$D \lhd A$, and such that $[A:D] = p^N$ for some $1 \leq N \leq p^r s + r$.
\end{lemma}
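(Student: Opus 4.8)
The plan is to let $D$ be the normal core of $C$ in $A$, namely $D = \bigcap_{a \in A} a C a^{-1}$. This is visibly contained in $C$ (giving $D < C$ in the notation of the statement) and is normal in $A$, so the only real content is the bound on $[A:D]$, and for that I must (i) bound the number of distinct conjugates of $C$ that occur and (ii) bound the index in $A$ of the intersection of those conjugates.

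For (i): since $B \lhd A$, every conjugate satisfies $aCa^{-1} \le aBa^{-1} = B$, and moreover $aCa^{-1} \lhd B$ because $a$ normalizes $B$ and $C \lhd B$. Also, for $b \in B$ we have $bCb^{-1} = C$, so the conjugate $aCa^{-1}$ depends only on the coset $aB \in A/B$; hence there are at most $[A:B] = p^r$ distinct conjugates, say $C_1, \dots, C_t$ with $t \le p^r$, each normal in $B$ with $[B:C_i] = p^s$, and $D = C_1 \cap \dots \cap C_t$. For (ii): the subgroups $C_i/D$ are normal in $B/D$ with trivial intersection, so the diagonal map embeds $B/D$ into the direct product $\prod_{i=1}^{t} B/C_i$; each factor $B/C_i$ is isomorphic to $B/C$ and hence has order $p^s$, and $t \le p^r$, so $B/D$ is a finite $p$-group of order at most $p^{sp^r}$.

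Putting this together, $[A:D] = [A:B]\,[B:D] = p^{r}\,|B/D|$ is a power of $p$, say $p^N$, with $N \le r + s p^{r} = p^{r} s + r$; and $N \ge 1$ since $D \subseteq C$ while $[A:C] = p^{r+s} > 1$ forces $D \ne A$. The one point to be careful about — and, I expect, the crux of the whole argument — is the observation that each conjugate $aCa^{-1}$ is actually \emph{normal} in $B$, not merely a subgroup of it; it is exactly this that lets $B/D$ embed into a product of $p$-groups rather than just into a symmetric group, which is what guarantees that $[A:D]$ is a genuine prime power (and keeps the exponent at $p^r s + r$). Everything else is routine index arithmetic.
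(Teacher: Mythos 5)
Your proof is correct, and it shares the paper's starting point: set $D$ to be the normal core of $C$ in $A$, note that $aCa^{-1}$ depends only on the coset $aB$ (so there are at most $p^r$ distinct conjugates), and that each conjugate has index $p^s$ in $B$. Where you diverge is in how you bound $[B:D]$. The paper picks coset representatives $a_1,\ldots,a_{p^r}$ and builds the chain $B \rhd C = C_1 \rhd C_2 \rhd \cdots \rhd C_{p^r} = D$ with $C_i = \bigcap_{j\le i} a_j^{-1}Ca_j$, showing via the second isomorphism theorem that each successive index $[C_{i-1}:C_i]$ is a power of $p$ at most $p^s$; you instead embed $B/D$ diagonally into $\prod_i B/C_i$, a $p$-group of order at most $p^{sp^r}$. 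The chain argument is really the product embedding unrolled one conjugate at a time, so neither route buys anything substantive over the other, though your packaging is arguably a bit slicker. Your remark that normality of the conjugates in $B$ (not mere containment) is the crux is accurate, and it applies equally to the paper's proof, which uses that normality implicitly when invoking the second isomorphism theorem to identify $C_{i-1}/C_i$ with a subgroup of $B/(a_i^{-1}Ca_i)$.
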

\begin{proof}
Define $D = \bigcap_{a \in A} a^{-1} C a$.  Clearly we have $D < C$ and $D \lhd A$, so we must only
prove the indicated result about $[A:D]$.  Let $T = \{a_1,\ldots,a_{p^{r}}\}$ be a complete set of coset
representatives for $B$ in $A$ with $a_1 = 1$.  Hence we have $D = \bigcap_{j=1}^{p^r} a_j^{-1} C a_j$.  For
$1 \leq i \leq p^{r}$, define $C_i = \bigcap_{j=1}^{i} a_j^{-1} C a_j$.  We thus have
$$A \rhd B \rhd C = C_1 \rhd C_2 \rhd \cdots \rhd C_{p^{r}} = D.$$
We claim that for $1 < i \leq p^{r}$ we have $[C_{i-1}:C_{i}] = p^{k_i}$ for some $0 \leq k_i \leq s$.  Indeed, we have
$$C_{i-1} / C_{i} = C_{i-1} / (a_{i}^{-1} C a_i \cap C_{i-1}) \cong (C_{i-1} \cdot (a_i^{-1} C a_i)) / a_i^{-1} C a_i < B / a_i^{-1} C a_i.$$
Since $[B:a_i^{-1} C a_i] = [B:C] = p^s$, the claim follows.  We conclude that
\begin{align*}
[A:D] &= [A:B][B:C][C_1:C_2] \cdots [C_{p^r-1}:C_{p^r}] = p^r p^s p^{k_2} \cdots p^{k_{p^r}} \leq p^r (p^s)^{p^r},
\end{align*}
as desired.
\end{proof}

\begin{proof}[{Proof of Lemma \ref{lemma:grouptheory}}]
The proof will be by induction on $n$.  The base case $n=1$ is trivial.  Now assume that $n > 1$ and that the lemma
is true for all smaller $n$.  Applying the inductive hypothesis to the sequence $G_1 \rhd \cdots \rhd G_n$, we obtain
a group $H'$ such that $H' < G_n$, such that $H' \lhd G_1$, and such that $[G_1:H'] = p^{N'}$ with 
$N' \leq m \frac{p^{m(n-1)} - 1}{p^m - 1}$.  We can therefore apply Lemma \ref{lemma:grouptheory2} to the sequence 
$G_0 \rhd G_1 \rhd H'$ and obtain a group $H$ such that $H < H' < G_n$, such that $H \lhd G_0$, and 
such that $[G_0 : H] = p^N$ for some $N$ that satisfies
$$N \leq p^{m} N' + m \leq p^{m} m \frac{p^{m(n-1)} - 1}{p^m - 1} + m = m \frac{p^{mn}- p^m}{p^m - 1} + m = m \frac{p^{mn} - 1}{p^m - 1},$$ 
as desired.
\end{proof}

We will also need the following standard property of $p$-groups.  Recall that a group $G$ is
{\it at most $n$-step nilpotent} if $\LCS{n}{G} = 1$.

\begin{lemma}[{\cite[Theorem 5.33]{RotmanBook}}]
\label{lemma:pgroups}
Let $p$ be a prime and let $G$ be a group with $|G|=p^n$ for some $n \in \N$.  Then
$G$ is at most $n$-step nilpotent.
\end{lemma}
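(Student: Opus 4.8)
The plan is to prove the equivalent statement $\LCS{n}{G} = 1$ by induction on $n$, the only real input being the classical consequence of the class equation that a nontrivial finite $p$-group has nontrivial center: in $|G| = |Z(G)| + \sum_i |\mathcal{O}_i|$, where the $\mathcal{O}_i$ are the noncentral conjugacy classes, each $|\mathcal{O}_i|$ is a positive power of $p$, so $p \mid |Z(G)|$. The substantive range is $n \geq 2$; for $n \leq 1$ the assertion is immediate.

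For the base case $n = 2$ I would use the standard fact that $G/Z(G)$ is never nontrivial cyclic: if $G/Z(G) = \langle g Z(G) \rangle$, then every element of $G$ has the form $g^a z$ with $z \in Z(G)$, and any two such elements commute, so $G = Z(G)$. Combined with $Z(G) \neq 1$, this forces $Z(G) = G$ when $|G| = p^2$, i.e.\ $G$ is abelian, so $\LCS{2}{G} = [G,G] = 1$.

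For the inductive step ($n \geq 3$), I would pick a central element $z$ of order $p$ (possible since $Z(G)$ is a nontrivial $p$-group) and set $\overline{G} = G/\langle z \rangle$, a group of order $p^{n-1}$. By induction $\LCS{n-1}{\overline{G}} = 1$. Applying the elementary identity $\LCS{k}{G/N} = \LCS{k}{G}N/N$ --- valid for any normal $N$ and any $k \geq 1$, by a one-line induction on $k$ from $[xN,yN] = [x,y]N$ --- with $N = \langle z \rangle$ and $k = n-1$ gives $\LCS{n-1}{G} \subseteq \langle z \rangle \subseteq Z(G)$. Hence $\LCS{n}{G} = [\LCS{n-1}{G}, G] \subseteq [Z(G), G] = 1$, completing the induction.

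The one point needing care --- and the closest thing to an obstacle --- is the bookkeeping of indices in the small cases: the reduction from $n$ to $n-1$ only goes down to $n = 3$, since the inductive hypothesis at $n = 2$ is the statement that groups of order $p^2$ are abelian, not that groups of order $p$ are trivial, so the separate $n = 2$ argument really is needed, and that is exactly where the fact that $G/Z(G)$ is never nontrivial cyclic gets used. Everything else is routine. One can also package the whole argument via the upper central series $1 = Z_0 \subsetneq Z_1 \subsetneq \cdots \subsetneq Z_c = G$: each successive quotient $Z_{i+1}/Z_i = Z(G/Z_i)$ is nontrivial, and when $c \geq 2$ the top quotient $G/Z_{c-1}$ is not cyclic (apply the non-cyclic fact to $G/Z_{c-2}$), so it has order at least $p^2$; counting orders then yields $|G| \geq p^{c+1}$, hence $c \leq n-1$ and $\LCS{n}{G} \subseteq \LCS{c+1}{G} = 1$.
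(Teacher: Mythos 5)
The paper offers no proof of this lemma; it is imported as a textbook fact (Rotman, Theorem 5.33). Your argument is a correct, self-contained proof of the standard kind: nontrivial center from the class equation; base case $|G| = p^2$ via the observation that $G/Z(G)$ nontrivial cyclic would force $G$ abelian; and inductive step passing to $G/\langle z\rangle$ for a central $z$ of order $p$, then using $\gamma_k(G/N) = \gamma_k(G)N/N$ to conclude $\gamma_{n-1}(G) \subseteq \langle z\rangle \subseteq Z(G)$, whence $\gamma_n(G) \subseteq [Z(G),G] = 1$. The repackaging via the upper central series is also fine, using that the upper and lower central series terminate at the same index.

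One small inaccuracy worth flagging: with the paper's convention that ``at most $n$-step nilpotent'' means $\gamma_n(G)=1$, the $n=1$ instance literally asserts $\gamma_1(G)=G=1$ for a group of order $p$, which is false. So your remark that ``for $n \leq 1$ the assertion is immediate'' should be dropped; the statement really begins at $n \geq 2$, and your $n=2$ case is the genuine base case. This off-by-one is an artifact of the lemma's phrasing rather than a gap in your argument, and it is harmless for the paper's uses of the lemma (which take $n \geq 3$), but as written the sentence claims something that isn't true.
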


We can now prove the lower bound in Theorem \ref{theorem:lcsgeneral}.

\begin{proof}[{Proof of Theorem \ref{theorem:lcsgeneral}, lower bound}]
Let $f : S^1 \rightarrow \Sigma$ be an immersion whose singularities consist of $i(f)$ isolated double
points.  Assume that $f$ is freely homotopic to a nontrivial element of $\LCS{k}{\pi_1(\Sigma)}$.
Our goal is to show that $i(f) \geq \log_8 (k) - 1$; i.e.\ that $k \leq 8^{i(f)+1}$.

By Lemma \ref{lemma:reducetocompact}, we may assume that $\Sigma$ is compact.  Choose a basepoint
$\ast \in f(S^1)$ and let $x \in \pi_1(\Sigma,\ast)$ be the based curve corresponding to $f$.
Applying Lemma \ref{lemma:coveringlemma} repeatedly, we obtain a subnormal sequence
$$\pi_1(\Sigma,\ast) = G_0 \rhd G_1 \rhd \cdots \rhd G_n$$
with $n \leq i(f) + 1$ such that $x \notin G_n$ and such that $[G_{i-1}:G_i] = 2^{3}$ 
for $1 \leq i \leq n$.  Applying Lemma \ref{lemma:grouptheory},
we obtain a group $H$ such that $H < G_n$, such that $H \lhd \pi_1(\Sigma,\ast)$, and such that
$[\pi_1(\Sigma,\ast):H] = 2^N$ for some 
$$N \leq 3 \frac{2^{3n} - 1}{2^3 - 1} \leq 8^n \leq 8^{i(f)+1}.$$
By Lemma \ref{lemma:pgroups}, we deduce that $\pi_1(\Sigma,\ast) / H$ is at most $8^{i(f)+1}$-step nilpotent.  In
other words, 
$$\LCS{8^{i(f)+1}}{\pi_1(\Sigma,\ast)} < H.$$  
Since $H$ is a normal subgroup of $\pi_1(\Sigma,\ast)$ and $f$ is freely homotopic to $x \notin H$, it
follows that $f$ is not freely homotopic to any element of $H$.  We conclude that $k \leq 8^{i(f)+1}$, as
desired. 
\end{proof}

\subsection{Derived series}
\label{section:dergenerallower}

We now prove the lower bound in Theorem \ref{theorem:dergeneral}.  The
proof will require the following lemma.

\begin{lemma}
\label{lemma:simpleclosedcurves}
Let $\Sigma$ be an orientable surface (not necessarily compact) with $\pi_1(\Sigma)$ nonabelian.
Also, let $f : S^1 \rightarrow \Sigma$ be a non-nullhomotopic simple closed
curve.  Then $f$ is not freely homotopic to any element of $\LCS{3}{\pi_1(\Sigma)}$.
\end{lemma}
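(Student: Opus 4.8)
The plan is to argue by contradiction, first reducing to compact surfaces via Lemma~\ref{lemma:reducetocompact}: if $f$ were freely homotopic to an element of $\LCS{3}{\pi_1(\Sigma)}$, that lemma would produce a compact subsurface with nonabelian fundamental group carrying a curve $f'$ — still simple, since the subsurface is embedded, and still not nullhomotopic — that is freely homotopic to an element of $\LCS{3}{}$ of its fundamental group. So it suffices to treat the compact case. Place the basepoint $\ast$ on $f$; using normality of $\LCS{3}{}$, suppose the based loop $x$ represented by $f$ lies in $\LCS{3}{\pi_1(\Sigma,\ast)}$, and derive a contradiction. Since $\LCS{3}{\pi_1(\Sigma,\ast)}\subseteq\LCS{2}{\pi_1(\Sigma,\ast)}=\Ker(\pi_1(\Sigma,\ast)\to\HH_1(\Sigma;\Z))$, the curve $f$ is nullhomologous, hence (being simple) separating. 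Write $\Sigma=\Sigma_1\cup_f\Sigma_2$ for the two pieces; neither is a disc since $f$ is not nullhomotopic, and van Kampen gives $\pi_1(\Sigma,\ast)=\pi_1(\Sigma_1,\ast)\ast_{\langle f\rangle}\pi_1(\Sigma_2,\ast)$ with $\langle f\rangle\cong\Z$.

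Suppose first $\partial\Sigma\neq\emptyset$; after relabelling, $\Sigma_2$ has a boundary component other than $f$, so $[f]$ is primitive in $\HH_1(\Sigma_2;\Z)$ and some homomorphism $\pi_1(\Sigma_2,\ast)\to\Z$ sends $f$ to a generator. Composing with $\Z\xrightarrow{\sim}\langle f\rangle$ produces a retraction of $\pi_1(\Sigma_2,\ast)$ onto $\langle f\rangle$, and combining it with the identity on $\pi_1(\Sigma_1,\ast)$ produces a retraction $r:\pi_1(\Sigma,\ast)\to\pi_1(\Sigma_1,\ast)$; since $r(x)=x$, we get $x\in\LCS{3}{\pi_1(\Sigma_1,\ast)}$. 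Now $\pi_1(\Sigma_1,\ast)$ is infinite cyclic — whence $\LCS{3}{}=1$ and $f$ is nullhomotopic, absurd — or free of rank $\geq 2$, in which case $f$ is a boundary component of $\Sigma_1$, and either $f$ is homologically nontrivial in $\Sigma_1$, forcing $x\notin\LCS{2}{\pi_1(\Sigma_1,\ast)}$, or $\Sigma_1$ has genus $h\geq 1$, a free basis $a_1,b_1,\dots,a_h,b_h$ can be chosen with $f=[a_1,b_1]\cdots[a_h,b_h]$, and the image of $f$ in $\LCS{2}{\pi_1(\Sigma_1,\ast)}/\LCS{3}{\pi_1(\Sigma_1,\ast)}\cong\Lambda^2\HH_1(\Sigma_1;\Z)$ is $\sum_{i=1}^h\bar a_i\wedge\bar b_i\neq 0$. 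All subcases contradict $x\in\LCS{3}{\pi_1(\Sigma_1,\ast)}$.

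Suppose instead $\Sigma$ is closed, necessarily of genus $g\geq 2$. Then $\Sigma_1,\Sigma_2$ are copies of $\Sigma_{h,1}$ and $\Sigma_{g-h,1}$ with $1\leq h\leq g-1$, and by the classification of separating curves we may choose generators $a_1,b_1,\dots,a_g,b_g$ of $\pi_1(\Sigma,\ast)$, subject only to the surface relation $[a_1,b_1]\cdots[a_g,b_g]=1$, with $f=[a_1,b_1]\cdots[a_h,b_h]$. A direct computation with this presentation identifies $\LCS{2}{\pi_1(\Sigma,\ast)}/\LCS{3}{\pi_1(\Sigma,\ast)}$ with $\Lambda^2\HH_1(\Sigma;\Z)/\langle\omega\rangle$, where $\omega=\sum_{i=1}^g\bar a_i\wedge\bar b_i$; under this identification the image of $f$ is $\sum_{i=1}^h\bar a_i\wedge\bar b_i$, which is not a multiple of $\omega$ because $h<g$ (compare coefficients of $\bar a_g\wedge\bar b_g$), hence is nonzero — contradicting $x\in\LCS{3}{\pi_1(\Sigma,\ast)}$.

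The case bookkeeping — non-separating versus separating, boundary-parallel versus not, and which side of a separating curve carries the extra boundary needed for the retraction — is the bulk of the write-up but entirely routine. The one step I expect to demand genuine work is the last: identifying $\LCS{2}{}/\LCS{3}{}$ of a closed surface group with $\Lambda^2\HH_1/\langle\omega\rangle$, and checking that the bivector $\sum_{i=1}^h\bar a_i\wedge\bar b_i$ associated to a proper subsurface is not a multiple of the relation class $\omega$.
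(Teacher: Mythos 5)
Your argument is correct, but it takes a genuinely different route from the paper's. The paper disposes of this lemma in a few lines by reusing machinery it already built for the lower bound of Theorem~\ref{theorem:lcsgeneral}: since $f$ is simple, Case~1 of the proof of Lemma~\ref{lemma:coveringlemma} furnishes a surjection $\psi:\pi_1(\Sigma)\to H$ with $|H|=2^3$ and $\psi(x)\neq 1$ (using $\Z/8\Z$ if $f$ is not nullhomologous, the dihedral group of order $8$ if it is), and then Lemma~\ref{lemma:pgroups} says $H$ is at most $3$-step nilpotent, so $\LCS{3}{\pi_1(\Sigma)}\subset\Ker(\psi)$, done. Your proof instead avoids covering spaces and finite quotients entirely: you use the fact that a nullhomologous simple closed curve separates, reduce by a retraction to a one-holed subsurface where $f$ is the genus-$h$ product of commutators, and then read off nontriviality directly from the identification of $\LCS{2}{}/\LCS{3}{}$ with $\Lambda^2\HH_1$ (free case) or $\Lambda^2\HH_1/\langle\omega\rangle$ (closed case). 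What the paper's approach buys is brevity and uniformity with the rest of \S\ref{section:lowerbounds}; what yours buys is that it is self-contained, pinpoints exactly which Lie-algebra class obstructs $x\in\LCS{3}{}$, and in fact proves slightly more (the image of $x$ in $\LCS{2}{}/\LCS{3}{}$ is the explicit bivector $\sum_{i\le h}\bar a_i\wedge\bar b_i$). One minor remark on the write-up: since you observe at the outset that $x\in\LCS{2}{}$ forces $f$ to be separating, there is no ``non-separating'' branch in the bookkeeping, so your final paragraph slightly overstates the case analysis. Also, the one step you flag as needing real work -- that $\LCS{2}{G}/\LCS{3}{G}\cong\Lambda^2\HH_1/\langle\omega\rangle$ for $G$ a closed surface group -- is indeed correct; it follows from $N=\langle\langle R\rangle\rangle\subset\LCS{2}{F}$ together with the observation that, modulo $\LCS{3}{F}$, every conjugate of $R$ equals $R$, so the image of $N$ in $\LCS{2}{F}/\LCS{3}{F}=\Lambda^2\HH_1$ is exactly $\langle\omega\rangle$.
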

\begin{proof}
By Lemma \ref{lemma:reducetocompact}, we may assume that $\Sigma$ is compact.
Assume that $f$ is freely homotopic to $x \in \pi_1(\Sigma)$.
Since $f$ is simple, Lemma \ref{lemma:coveringlemma} implies that there is a finite group $H$ with
$|H| = 2^3$ and a surjection $\psi : \pi_1(\Sigma) \rightarrow H$ such that $\psi(x) \neq 1$.
Lemma \ref{lemma:pgroups} says that $H$ is at most $3$-step nilpotent, so $\LCS{3}{\pi_1(\Sigma)} \subset \Ker(\psi)$.
We conclude that $x \notin \LCS{3}{\pi_1(\Sigma)}$, as desired.
\end{proof}

We will also need the following standard lemma.

\begin{lemma}[{\cite[Exercise 5.50]{RotmanBook}}]
\label{lemma:threesubgroups}
If $G$ is a group, then for all $k \geq 1$ we have $\DER{k}{G} < \LCS{2^{k-1}}{G}$.
\end{lemma}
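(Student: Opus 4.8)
The plan is to prove $\DER{k}{G} < \LCS{2^{k-1}}{G}$ by induction on $k$, with the inductive step resting entirely on the classical commutator identity $[\LCS{m}{G},\LCS{n}{G}] < \LCS{m+n}{G}$, valid for all $m,n \geq 1$. The base case $k=1$ is immediate, since $\DER{1}{G} = G = \LCS{1}{G} = \LCS{2^{0}}{G}$.

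For the inductive step, suppose $\DER{k}{G} < \LCS{2^{k-1}}{G}$. Using that $[A,B]$ is monotone in each of $A$ and $B$, together with the identity above applied with $m = n = 2^{k-1}$, I would write
$$\DER{k+1}{G} = [\DER{k}{G},\DER{k}{G}] < [\LCS{2^{k-1}}{G},\LCS{2^{k-1}}{G}] < \LCS{2^{k-1}+2^{k-1}}{G} = \LCS{2^{k}}{G},$$
which closes the induction. Thus the entire argument is these two lines plus the commutator identity.

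The one substantive ingredient --- and the natural place to do work, if one does not simply cite it alongside the Rotman reference already in use --- is the identity $[\LCS{m}{G},\LCS{n}{G}] < \LCS{m+n}{G}$. I would prove it by fixing $m$ and inducting on $n$; the case $n=1$ is the definition of the lower central series. For the step one must show $[\LCS{m}{G},\LCS{n+1}{G}] = [\LCS{m}{G},[\LCS{n}{G},G]] < \LCS{m+n+1}{G}$, which, rewriting the left side as $[[\LCS{n}{G},G],\LCS{m}{G}]$, is precisely a setting for the three subgroups lemma (equivalently the Hall--Witt identity) applied to the triple $\LCS{n}{G}$, $G$, $\LCS{m}{G}$: it suffices to check that $[[G,\LCS{m}{G}],\LCS{n}{G}]$ and $[[\LCS{m}{G},\LCS{n}{G}],G]$ both lie in $\LCS{m+n+1}{G}$. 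The first equals $[\LCS{m+1}{G},\LCS{n}{G}]$, contained in $\LCS{m+n+1}{G}$ by the induction hypothesis with $m$ replaced by $m+1$ (legitimate since the statement is proved for all $m$ at once); the second is contained in $[\LCS{m+n}{G},G] = \LCS{m+n+1}{G}$ by the induction hypothesis as stated. Since $\LCS{m+n+1}{G}$ is normal in $G$, the three subgroups lemma then gives $[[\LCS{n}{G},G],\LCS{m}{G}] < \LCS{m+n+1}{G}$, as required. The main obstacle here is purely organizational: arranging the three subgroups so that their roles make both hypotheses fall out of the outer induction.
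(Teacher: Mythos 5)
Your proof is correct, and it is the standard textbook argument (which is presumably what Rotman's Exercise 5.50 has in mind); the paper itself offers no proof, simply citing \cite{RotmanBook}, so there is nothing to compare against internally. Your outer induction on $k$ with base case $\DER{1}{G} = G = \LCS{1}{G}$ and step $\DER{k+1}{G} = [\DER{k}{G},\DER{k}{G}] < [\LCS{2^{k-1}}{G},\LCS{2^{k-1}}{G}] < \LCS{2^k}{G}$ is exactly right, and your inner proof of the commutator identity $[\LCS{m}{G},\LCS{n}{G}] < \LCS{m+n}{G}$ by induction on $n$, applying the three subgroups lemma to the cyclic triple $[[\LCS{n}{G},G],\LCS{m}{G}]$, $[[G,\LCS{m}{G}],\LCS{n}{G}]$, $[[\LCS{m}{G},\LCS{n}{G}],G]$ with target $\LCS{m+n+1}{G}$ (which is indeed normal), is the right way to organize it: the first of the two auxiliary inclusions uses the hypothesis with $m$ shifted to $m+1$, and the second uses it as stated, so inducting on $n$ while quantifying over all $m$ is essential, and you have set it up precisely so that this works.
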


We can now prove the lower bound in Theorem \ref{theorem:dergeneral}.

\begin{proof}[{Proof of Theorem \ref{theorem:dergeneral}, lower bound}]
We will prove that $2^{\lceil k/2 \rceil - 2} \leq \DERNorm{k}{\Sigma}$ for $k \geq 3$ by induction on $k$.  The base
cases $k=3$ and $k=4$ follow from Lemma \ref{lemma:simpleclosedcurves} combined with Lemma
\ref{lemma:threesubgroups}.  Now assume that $k > 4$ and that the result is true for all smaller $k$.  It is enough
to prove that 
$$\DERNorm{k}{\Sigma} \geq 2 \cdot \DERNorm{k-2}{\Sigma}.$$  
Consider an immersion $f : S^1 \rightarrow \Sigma$
whose singularities consist of $i(f)$ isolated double points.  Assume that $i(f) < 2 \cdot \DERNorm{k-2}{\Sigma}$.
Our goal is to show that $f$ is not freely homotopic to any element of $\DERR{k}{\pi_1(\Sigma)}$.

Let $\pi : \widetilde{\Sigma} \rightarrow \Sigma$ be the normal covering corresponding to the subgroup
$\DERR{k-2}{\pi_1(\Sigma)}$.  If $f$ does not lift to a closed curve in $\widetilde{\Sigma}$, then $f$ is not freely
homotopic to any element of $\DERR{k-2}{\pi_1(\Sigma)}$, and thus is certainly not freely
homotopic to any element of $\DERR{k}{\pi_1(\Sigma)}$.
Assume, therefore, that there is a lift $\tilde{f} : S^1 \rightarrow \widetilde{\Sigma}$ of $f$. We claim that
$\tilde{f}$ is a simple closed curve.  Indeed, define
\begin{align*}
D(f) = \{\text{$(x,y)$ $|$ $x,y \in S^1$, $x \neq y$, $f(x)=f(y)$}\},\\
D(\tilde{f}) = \{\text{$(x,y)$ $|$ $x,y \in S^1$, $x \neq y$, $\tilde{f}(x)=\tilde{f}(y)$}\}.
\end{align*}
Clearly $D(\tilde{f}) \subset D(f)$, and we want to prove that $D(\tilde{f}) = \emptyset$. Consider any $(x,y) \in
D(f)$.  The points $x$ and $y$ divide $S^1$ into two arcs $\alpha$ and $\alpha'$, and the restrictions of $f$ to both
$\alpha$ and $\alpha'$ are closed curves.  The number of self-intersections of one of $f|_{\alpha}$ and
$f|_{\alpha'}$ (say $f|_{\alpha}$) is less than half of the number of self-intersections of $f$.  Hence the
closed curve defined by $f|_{\alpha}$ has fewer than than $\DERNorm{k-2}{\Sigma}$ self-intersections, so it is not
freely homotopic to any element of $\DERR{k-2}{\pi_1(\Sigma)}$.  We conclude that $\tilde{f}|_{\alpha}$ is not a
closed curve, so $(x,y) \notin D(\tilde{f})$, as desired.

Observe now that by Lemmas \ref{lemma:simpleclosedcurves} and \ref{lemma:threesubgroups}, the curve
$\tilde{f}$ is not freely homotopic to any element of $\DERR{3}{\pi_1(\widetilde{\Sigma})}$.  Since
$$\DERR{3}{\pi_1(\widetilde{\Sigma})} = \DERR{3}{\DERR{k-2}{\pi_1(\Sigma)}} = \DERR{k}{\pi_1(\Sigma)},$$
we conclude that $f$ is not freely homotopic to any element of $\DERR{k}{\pi_1(\Sigma)}$, as desired.
\end{proof}

\section{Upper bounds}
\label{section:upperbounds}

We now prove the upper bounds in Theorems \ref{theorem:lcsgeneral} and \ref{theorem:dergeneral}.  We will need two
lemmas.

\begin{lemma}
\label{lemma:wordlength}
Let $(\Sigma,\ast)$ be a based surface and let $S \subset \pi_1(\Sigma,\ast)$ be a set
consisting of elements that can be realized simultaneously by simple closed curves that only intersect at $\ast$.  Then
for all $x \in \langle S \rangle \subset \pi_1(\Sigma,\ast)$, we have $i(x) \leq \binom{\Length{S}{x}}{2}$.
\end{lemma}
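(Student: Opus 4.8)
The plan is to realize $x$ by an explicit loop obtained by concatenating the given simple closed curves along a shortest word for $x$, and then to show that all of its self-intersections can be confined to a small disc around $\ast$, where they are controlled by a count of pairs of arcs. Concretely, I would write $x = s_1^{\epsilon_1}\cdots s_\ell^{\epsilon_\ell}$ with $s_i \in S$, $\epsilon_i = \pm 1$, and $\ell = \Length{S}{x}$, and fix a realization of $S$ by simple closed curves $\{c_s\}_{s \in S}$ based at $\ast$ and pairwise meeting only at $\ast$. Choose a small closed disc $D$ about $\ast$; then $a_s := c_s \cap (\Sigma \setminus \Interior(D))$ is a properly embedded arc in $\Sigma \setminus \Interior(D)$, and $a_s \cap a_{s'} = \emptyset$ when $s \neq s'$. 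Using a bicollar of each $a_s$ inside $\Sigma \setminus \Interior(D)$, take pairwise-disjoint parallel copies of $a_s$, with endpoints on $\partial D$, one copy for each occurrence of $s$ in the word. Concatenating the loops $c_{s_i}^{\epsilon_i}$ while using a distinct parallel copy of $a_{s_i}$ for the portion of $c_{s_i}^{\epsilon_i}$ lying outside $D$ yields a loop $\gamma$ based at $\ast$; since replacing an embedded arc by a nearby parallel copy does not change homotopy classes, $\gamma$ is freely homotopic to $x$.

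Next I would analyze $\gamma$ relative to $\partial D$. By construction the part of $\gamma$ outside $D$ is a disjoint union of embedded arcs, so $\gamma$ has no self-intersections there; on the other hand $\gamma$ meets $D$ in exactly $\ell$ arcs, each having both endpoints on $\partial D$. After a small perturbation we may assume these $2\ell$ endpoints are distinct and in general position on $\partial D$. Identifying $D$ with a round disc, I would replace each of the $\ell$ arcs of $\gamma \cap D$ by the straight chord joining its two endpoints; because $D$ is simply connected this is a homotopy rel $\partial D$, so it does not change the free homotopy class of $\gamma$. Any two distinct chords meet in at most one point, so summing the local contributions $\binom{j_p}{2}$ over the finitely many points $p$ having $j_p \geq 2$ preimages counts each of the $\binom{\ell}{2}$ pairs of chords at most once; hence the resulting curve has at most $\binom{\ell}{2}$ self-intersections, and therefore $i(x) \leq \binom{\Length{S}{x}}{2}$.

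The step I expect to be the main obstacle is the bookkeeping that guarantees $\gamma$ has no self-intersections outside $D$: this is exactly what forces the parallel copies — a repeated generator would otherwise contribute an entire arc of double points — and one must also verify that $\gamma$ meets $D$ in precisely $\ell$ arcs and that none of its self-intersections escape $D$. Once this is arranged, the chord count at the end is immediate.
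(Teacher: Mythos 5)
Your proof is correct and follows essentially the same approach as the paper: realize each letter by an arc outside a small disc $D$ about $\ast$ (with disjoint parallel copies for repeated letters), connect the arcs by straight chords inside $D$, and bound the self-intersections by the number of pairs of chords. You are a bit more explicit than the paper about the parallel-copy bookkeeping and about converting points with several preimages into a count of chord pairs, but the underlying argument is the same.
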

\begin{proof}
We can assume that $\ast \in \Interior(\Sigma)$. Set $n = \Length{S}{x}$ and write $x = s_1 \cdots s_n$ with $s_i \in
S \cup S^{-1}$ for $1 \leq i \leq n$.  For $1 \leq i \leq n$, we can choose embeddings $f_i : S^1 \rightarrow \Sigma$
such that $f_i$ represents $s_i$.  Moreover, we can choose the $f_i$ such that 
$f_i(S^1) \cap f_j(S^1) = \{\ast\}$ for $1 \leq i < j \leq n$.
Let $D \subset \Sigma$ be a closed embedded $2$-disc with $\ast \in D$ such that
$f_i(S^1) \cap D$ is a connected arc for
all $1 \leq i \leq n$.
Parametrize $D$ such that $D$ is the unit
disc in $\R^2$ and $\ast = (0,0)$.  For $1 \leq i \leq n$, let $f'_i : [0,1] \rightarrow \Sigma$ be a parametrization
of the oriented arc $f_i(S^1) \setminus \Interior(D)$. Observe that for $1 \leq i < j \leq n$ we have $f'_i([0,1])
\cap f'_j([0,1]) = \emptyset$.

We can now construct a curve $f : S^1 \rightarrow \Sigma$ that is freely homotopic to $x$ in the following
way.  The curve $f$ first traverses $f'_1$, then goes along a straight line in $D$ from $f'_1(1)$ to $f'_2(0)$, then
traverses $f'_2$, then goes along a straight line in $D$ from $f'_2(1)$ to $f'_3(0)$, then traverses $f'_3$, etc.  The
curve $f$ ends with a straight line in $D$ from $f'_n(1)$ to $f'_1(0)$.  Clearly $f$ is freely homotopic to $x$.  Moreover,
all self-intersections of $f$ must occur in $D$.  Since $f(S^1) \cap D$ consists of $n$ straight lines and any two of these
lines can intersect at most once, we conclude that $f$ has at most $\binom{n}{2}$ self-intersections, as desired.
\end{proof}

\begin{lemma}
\label{lemma:freegroup}
Let $S = \{a_1,a_2\}$ and let $F_S$ be the free group on $S$.  Then for all $k \geq 1$ there
exists some $w \in F_S$ with $w \neq 1$ such that $\Length{S}{w} \leq 4^{k-1}$ and $w \in \DER{k}{F_S}$.
\end{lemma}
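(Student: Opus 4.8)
The plan is to deduce the lemma from the following stronger claim, proved by induction on $k$: for every $k \geq 1$ the group $\DER{k}{F_S}$ contains two elements $u_k$ and $v_k$ with $\Length{S}{u_k} \leq 4^{k-1}$ and $\Length{S}{v_k} \leq 4^{k-1}$ such that $[u_k,v_k] \neq 1$. Granting this, the lemma follows at once: since $[u_k,v_k]\neq 1$ forces $u_k \neq 1$, we may take $w = u_k$. For the base case $k=1$ we have $\DER{1}{F_S} = F_S$, so we take $u_1 = a_1$ and $v_1 = a_2$; these have length $1 = 4^0$, and $[a_1,a_2]\neq 1$ because $F_S$ has rank $2$.

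For the inductive step, given $u_k, v_k \in \DER{k}{F_S}$ of length at most $4^{k-1}$ with $[u_k,v_k]\neq 1$, I would set
$$u_{k+1} = [u_k,v_k] \qquad\text{and}\qquad v_{k+1} = [u_k,v_k^{-1}].$$
Since $u_k$ and $v_k^{-1}$ both lie in $\DER{k}{F_S}$, the elements $u_{k+1}$ and $v_{k+1}$ lie in $[\DER{k}{F_S},\DER{k}{F_S}] = \DER{k+1}{F_S}$. Writing these out as words in $u_k$ and $v_k$ and using $\Length{S}{u_k^{-1}} = \Length{S}{u_k}$, we get $\Length{S}{u_{k+1}} \leq 2\Length{S}{u_k} + 2\Length{S}{v_k} \leq 4\cdot 4^{k-1} = 4^k$, and likewise $\Length{S}{v_{k+1}} \leq 4^k$; this is exactly where the constant $4$ in the statement comes from.

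What remains — and what I expect to be the main obstacle — is checking that the new pair still does not commute, i.e.\ that $[u_{k+1},v_{k+1}] \neq 1$. The key point is that, since $u_k$ and $v_k$ do not commute, the subgroup $\langle u_k,v_k\rangle \leq F_S$ is a non-abelian $2$-generated subgroup of a free group, hence is free of rank $2$ with $\{u_k,v_k\}$ a free basis (a surjection $F_2 \to F_2$ is an isomorphism). Thus $[u_{k+1},v_{k+1}] = 1$ would force $[[a,b],[a,b^{-1}]] = 1$ in the abstract free group on $a,b$; but $[a,b] = aba^{-1}b^{-1}$ and $[a,b^{-1}] = ab^{-1}a^{-1}b$ are cyclically reduced, are not proper powers, and are neither equal nor inverse to one another, so they are not powers of a common element and therefore do not commute. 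This closes the induction.

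Finally, I would point out why the naive alternative fails: one genuinely needs two short elements of $\DER{k}{F_S}$ to build a commutator landing in $\DER{k+1}{F_S}$, and the obvious candidate for the second one — a conjugate of $u_{k+1}$ by a generator of $F_S$ — overshoots the bound $4^k$ by an additive constant. Taking $v_{k+1} = [u_k,v_k^{-1}]$ is precisely what keeps the length at most $4^k$ while, via the remark about $[a,b]$ and $[a,b^{-1}]$, preserving non-commutativity so that the induction can be continued.
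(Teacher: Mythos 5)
Your proof is correct and takes essentially the same route as the paper: the paper defines $x_1 = a_1$, $y_1 = a_2$, $x_k = [x_{k-1},y_{k-1}]$, $y_k = [x_{k-1},y_{k-1}^{-1}]$ and proves by induction that $x_k,y_k$ generate a rank-$2$ free subgroup, which is exactly your pair $(u_k,v_k)$. The only difference is cosmetic: where the paper simply asserts $[[x_{k-1},y_{k-1}],[x_{k-1},y_{k-1}^{-1}]]\neq 1$ as a known fact about the rank-$2$ free group, you supply the short verification that $[a,b]$ and $[a,b^{-1}]$ are not proper powers and are neither equal nor inverse, hence do not commute.
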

\begin{proof}
Define elements $x_k$ and $y_k$ inductively as follows.
\begin{align*}
x_1 = a_1 \quad &\text{and} \quad y_1 = a_2,\\
x_k = [x_{k-1},y_{k-1}] \quad &\text{and} \quad y_k=[x_{k-1},y_{k-1}^{-1}].
\end{align*}
Clearly $\Length{S}{x_k} \leq 4^{k-1}$ and $x_k \in \DER{k}{F_S}$ for $k \geq 1$.  We
must therefore only prove that $x_k \neq 1$ for $k \geq 1$.  In fact, we will
prove by induction on $k$ that $x_k$ and $y_k$ generate a rank $2$ free subgroup of $F_S$ for $k \geq 1$.
The base case $k=1$ is trivial.  Now assume that $k > 1$ and that $x_{k-1}$ and $y_{k-1}$ generate
a rank $2$ free subgroup.  Since neither $x_k$ nor $y_k$ is trivial, they must generate either a rank
$2$ or rank $1$ free subgroup.  But since $x_{k-1}$ and $y_{k-1}$ generate a rank $2$ free subgroup, we have
$$[x_k,y_k] = [[x_{k-1},y_{k-1}],[x_{k-1},y_{k-1}^{-1}]] \neq 1,$$
so we conclude that $x_k$ and $y_k$ cannot generate a rank $1$ subgroup.
\end{proof}

We can now prove the upper bounds in Theorems \ref{theorem:lcsgeneral} and \ref{theorem:dergeneral}.

\begin{proof}[{Proof of Theorem \ref{theorem:dergeneral}, upper bound}]
We wish to prove that $\DERNorm{k}{\Sigma} \leq 2^{4k-5}$ for $k \geq 3$.  In fact, this inequality holds
for $k \geq 1$ (the assumption that $k \geq 3$ is necessary only in the lower bound), so fix
$k \geq 1$.  We claim that there
exists some $a_1,a_2 \in \pi_1(\Sigma,\ast)$ that generate a rank $2$ free subgroup of $\pi_1(\Sigma,\ast)$ and
can be realized simultaneously by simple closed curves that only intersect at $\ast$.  If $\Sigma$ is compact, then this
is trivial.  Otherwise, $\pi_1(\Sigma,\ast)$ must be a nonabelian free group (see, e.g., \cite[\S 44A]{AhlforsRiemann}), 
so we can find $a_1',a_2' \in \pi_1(\Sigma,\ast)$
that generate a rank $2$ free subgroup.  Like in the proof of the Theorem \ref{theorem:lcsbdry}, we can
``comb'' the intersections and self-intersections of $a_1'$ and $a_2'$ to $\ast$ and find a set $S' \subset \pi_1(\Sigma,\ast)$ 
of elements that can be realized simultaneously by simple
closed curves that only intersect at $\ast$ such that both $a_1'$ and $a_2'$ can be expressed as products
of elements of $S' \cup (S')^{-1}$.  There must then exist $a_1,a_2 \in S'$ that generate a rank $2$ free
subgroup, as desired.

Set $S = \{a_1,a_2\}$.  By Lemma \ref{lemma:freegroup},
there is some $w \in \langle S \rangle$ such that $\Length{S}{w} \leq 4^{k-1}$ and 
$w \in \DERR{k}{\pi_1(\Sigma)}$.  By Lemma \ref{lemma:wordlength}, we deduce that
$$i(w) \leq \binom{\Length{S}{x}}{2} \leq \frac{4^{k-1}(4^{k-1}-1)}{2} \leq \frac{1}{2} 4^{2k-2} = 2^{4k-5},$$
so $\DERNorm{k}{\Sigma} \leq 2^{4k-5}$, as desired.
\end{proof}

\begin{proof}[{Proof of Theorem \ref{theorem:lcsgeneral}, upper bound}]
Fix $k \geq 1$.  We can then find an integer $l$ such that $\log_2(k) \leq l-1 \leq \log_2(k)+1$.  The upper bound of
Theorem \ref{theorem:dergeneral} (which as we observed above holds for $k \geq 1$) implies that we can find 
$x \in \DERR{l}{\pi_1(\Sigma)}$ such that
$$i(x) \leq 2^{4l-5} \leq 2^{4 (\log_2(k)+2)-5} = 8k^4.$$
By Lemma \ref{lemma:threesubgroups}, we have $x \in \LCS{k}{\pi_1(\Sigma)}$, so we conclude that $\LCSNorm{k}{\Sigma}
\leq 8k^4$, as desired.
\end{proof}

\section{Word length in the lower central series}
\label{section:fox}

In this section, we will prove Theorem \ref{theorem:fox}.  As was indicated in the introduction,
this proof is inspired by an argument of Fox \cite[Lemma 4.2]{FoxFree}.  Our main tool
will be the Fox free differential calculus, so we begin by recalling a number of basic
facts about this calculus.  A good reference is \cite{FoxFree}.

Let $F$ be the free group on a set $S$ and let $\varepsilon : \Z F \rightarrow \Z$ be
the augmentation map; i.e.\ the unique linear map with $\varepsilon(g) = 1$ for all $g \in F(S)$.

\begin{definition}
A {\em free derivative} is a linear map $D : \Z F \rightarrow \Z F$ such that
$D(xy) = (D(x))\varepsilon(y) + x D(y)$ for all $x,y \in \Z F$.
\end{definition}

An easy induction establishes that if $D$ is a free derivative, then for $v_1,\ldots,v_k \in \Z F$
we have
\begin{equation}
\label{eqn:productrule}
D(v_1 \cdots v_k) = \sum_{i=1}^k (v_1 \cdots v_{i-1})(D(v_i))\varepsilon(v_{i+1}) \cdots \varepsilon(v_{k}).
\end{equation}
A consequence of \eqref{eqn:productrule} is that for $g \in F$, we have
\begin{equation}
\label{eqn:inverserule}
D(g^{-1}) = - g^{-1} D(g).
\end{equation}
The basic existence result for free derivatives is the following.

\begin{lemma}[{\cite[\S 2]{FoxFree}}]
\label{lemma:freederivexis}
For every $s \in S$, there is a unique free derivative $D_s$ satisfying $D_s(s) = 1$ and $D_s(s') = 0$
for $s' \in S$ with $s' \neq s$.  
\end{lemma}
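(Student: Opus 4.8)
The plan is to treat uniqueness and existence separately, using the product rule \eqref{eqn:productrule} and the inverse rule \eqref{eqn:inverserule} recorded above.

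\emph{Uniqueness.} Suppose $D$ is a free derivative with $D(s) = 1$ and $D(s') = 0$ for all $s' \in S \setminus \{s\}$. Applying the defining identity of a free derivative to $1 = 1 \cdot 1$ gives $D(1) = 0$. Given any $g \in F$, write $g = t_1^{e_1} \cdots t_n^{e_n}$ with $t_i \in S$ and $e_i \in \{\pm 1\}$, and apply \eqref{eqn:productrule} with $v_i = t_i^{e_i}$ (so $\varepsilon(v_i) = 1$), using \eqref{eqn:inverserule} to rewrite each $D(t_i^{-1})$ as $-t_i^{-1} D(t_i)$. This produces a closed formula for $D(g)$ in terms of $D(t_1), \dots, D(t_n)$, each of which is pinned down by hypothesis; since $D$ is $\Z$-linear, $D$ is then determined on all of $\Z F$. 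Hence at most one such $D_s$ exists.

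\emph{Existence.} I would simply define $D_s$ by the formula that uniqueness forces, and then verify well-definedness and the derivative property. Set $d(s) = 1$, set $d(t) = 0$ for $t \in S \setminus \{s\}$, and set $d(a^{-1}) = -a^{-1} d(a)$ for every letter $a \in S$ (one checks this prescription is consistent with reading an inverse letter either way). For a word $w = t_1^{e_1} \cdots t_n^{e_n}$ in $S \cup S^{-1}$, not assumed reduced, put
$$\phi(w) = \sum_{i=1}^{n} \big(t_1^{e_1} \cdots t_{i-1}^{e_{i-1}}\big)\, d\big(t_i^{e_i}\big) \in \Z F.$$
Assuming $\phi(w)$ depends only on the element of $F$ represented by $w$, it descends to a map $D_s \colon F \to \Z F$, which I extend $\Z$-linearly to $\Z F$. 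Then $D_s(s) = 1$ and $D_s(t) = 0$ for $t \ne s$ by the one-letter case of the formula, and for group elements $g = \overline{u}$ and $h = \overline{v}$, concatenating the sums $\phi(u)$ and $\phi(v)$ gives $D_s(gh) = D_s(g) + g D_s(h) = D_s(g)\varepsilon(h) + g D_s(h)$ because $\varepsilon(h) = 1$. Since both sides of the derivative identity are $\Z$-bilinear in their two arguments, agreement on group elements extends to all of $\Z F$.

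The one genuinely delicate point — and the step I expect to be the main obstacle — is showing $\phi$ is constant on each fiber of the map $\{\text{words}\} \to F$. Two words represent the same element of $F$ precisely when one is carried to the other by finitely many insertions and deletions of subwords of the form $a a^{-1}$ with $a \in S \cup S^{-1}$, so it suffices to compare $\phi(uv)$ with $\phi(u\,a a^{-1}\,v)$. The summands indexed by the letters of $u$ agree in the two sums; the summands indexed by the letters of $v$ agree because $u a a^{-1}$ and $u$ represent the same element of $F$, which is exactly the prefix multiplying those summands; and the two summands coming from the inserted $a a^{-1}$ contribute $\overline{u}\, d(a) + \overline{u}\, a\, d(a^{-1}) = \overline{u}\, d(a) - \overline{u}\, a\, a^{-1} d(a) = 0$. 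This gives well-definedness, and the rest is bookkeeping with \eqref{eqn:productrule} and \eqref{eqn:inverserule}.
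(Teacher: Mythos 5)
Your argument is correct and complete. The paper itself gives no proof of this lemma; it is stated as a citation to Fox \cite{FoxFree}, so there is no in-paper argument to compare against. Your two halves are both sound: the uniqueness step correctly derives $D(1)=0$ and then propagates the prescribed values on $S\cup S^{-1}$ through \eqref{eqn:productrule} and \eqref{eqn:inverserule} to pin down $D$ on all of $F$ and then $\Z F$; and the existence step correctly identifies the only subtle point, namely well-definedness of $\phi$ on words, and settles it by checking invariance under insertion/deletion of a subword $aa^{-1}$, which is the right reduction since such moves generate the equivalence on words over $S\cup S^{-1}$. The cancellation $\overline{u}\,d(a) + \overline{u}\,a\,d(a^{-1}) = 0$ is exactly where the sign in \eqref{eqn:inverserule} is used, and your observation that the prefixes indexing the letters of $v$ are unchanged as elements of $F$ (even though the words differ) is the key point.

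For context: Fox's own route in \cite{FoxFree} is more structural. He identifies the augmentation ideal $\ker\varepsilon \subset \Z F$ as a free left $\Z F$-module with basis $\{s-1 : s\in S\}$, and the $D_s$ are then the coordinate maps $g \mapsto g-1 \mapsto$ ($s$-th coordinate). That packages the well-definedness into the freeness assertion about the module, whereas your argument verifies it directly at the level of words. Both are standard; yours is more elementary and closer in spirit to the combinatorial manipulations the paper actually uses in \S\ref{section:fox}.
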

\noindent
By \eqref{eqn:productrule} and \eqref{eqn:inverserule}, we have
\begin{equation}
\label{eqn:exprule}
\varepsilon(D_s(s^k)) = k
\end{equation}
for all $s \in S$ and $k \in \Z$.

For $k \geq 1$ and $s_1,\ldots,s_k \in S$, we will call 
the product $D_{s_1} \cdots D_{s_k}$ a {\em free derivative of order $k$}.  
The basic fact connecting the Fox free differential calculus to
the lower central series of $F$ is the following easy lemma.

\begin{lemma}[{\cite[3.1]{ChenFoxLyndon}}]
\label{lemma:freelcs}
For $k \geq 2$ and $g \in \LCS{k}{F}$, we have $\epsilon(D(g)) = 0$ for all free derivatives $D$
of order less than or equal to $k-1$.
\end{lemma}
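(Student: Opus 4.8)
\end{lemma}

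\begin{proof}[{Proof strategy for Lemma \ref{lemma:freelcs}}]
The plan is to induct on $k$, using the Fox free derivatives as a combinatorial stand-in for the Magnus power series expansion: up to sign and ordering conventions, the integers $\varepsilon(D_{s_1} \cdots D_{s_j}(g))$ are exactly the coefficients of $g$ under the Magnus embedding $F \hookrightarrow \Z\langle\langle X_s : s \in S \rangle\rangle$, $s \mapsto 1 + X_s$, and the assertion to be proved is the classical fact that an element of $\LCS{k}{F}$ has Magnus expansion congruent to $1$ modulo terms of degree $\geq k$.  I will not invoke the Magnus embedding directly; instead the whole argument rests on one ``coproduct'' identity.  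Fix notation: for a word $w = s_1 \cdots s_j$ in the alphabet $S$, write $D_w = D_{s_1} \circ \cdots \circ D_{s_j}$ (with $D_{\varnothing} = \mathrm{id}$) and $\phi_w = \varepsilon \circ D_w : \Z F \to \Z$.  Then $\phi_{\varnothing} = \varepsilon$, so $\phi_{\varnothing}(g) = 1$ for $g \in F$, whereas $\phi_w(1) = 0$ whenever $|w| \geq 1$ since $D_s(1) = 0$ for every $s \in S$.  In this language the lemma asserts: if $g \in \LCS{k}{F}$ and $1 \leq |w| \leq k-1$, then $\phi_w(g) = 0$.

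The first and main step is to prove, for all $g,h \in F$ and every word $w$, the Leibniz identity
$$\phi_w(gh) = \sum_{w = uv} \phi_u(g)\, \phi_v(h),$$
the sum running over all ordered factorizations of the word $w$, with $u$ or $v$ allowed to be empty.  I would prove this by induction on $|w|$: peel the last letter $s$ off $w = w's$, apply the product rule in the form $D_s(gh) = D_s(g) + g D_s(h)$ (valid because $\varepsilon(h) = 1$), extend $\phi_{w'}$ to $\Z F$ by $\Z$-linearity, substitute the group elements occurring in $D_s(h)$ into the inductive hypothesis, and check that the resulting terms recombine into exactly the claimed sum.  Iterating the identity yields the $n$-fold version $\phi_w(g_1 \cdots g_n) = \sum_{w = u_1 \cdots u_n} \phi_{u_1}(g_1) \cdots \phi_{u_n}(g_n)$, and specializing $gh = 1$ gives, for $|w| \geq 1$, the recursion $\phi_w(g^{-1}) = -\phi_w(g) - \sum_{w = uv,\ 1 \leq |u| \leq |w|-1} \phi_u(g)\phi_v(g^{-1})$.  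From the Leibniz identity together with this recursion one checks that, for any fixed $m \geq 1$, the set of $y \in F$ with $\phi_{w'}(y) = 0$ for all $w'$ with $1 \leq |w'| \leq m$ is a subgroup of $F$.

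With this in hand the induction on $k$ is short.  The base case $k = 2$ is the remark that for $|w| = 1$ the identity reads $\phi_w(gh) = \phi_w(g) + \phi_w(h)$, so $\phi_w : F \to \Z$ is a homomorphism --- in fact the exponent-sum homomorphism of the generator in question, by \eqref{eqn:exprule} --- and hence vanishes on $\LCS{2}{F} = [F,F]$.  For the inductive step, assume the lemma for $k$ and take $g \in \LCS{k+1}{F} = [\LCS{k}{F}, F]$ and $1 \leq |w| \leq k$.  By the subgroup remark it is enough to treat a single generator $g = [h,x] = hxh^{-1}x^{-1}$ with $h \in \LCS{k}{F}$ and $x \in F$.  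Expanding $\phi_w([h,x])$ by the iterated Leibniz identity and using that $h, h^{-1} \in \LCS{k}{F}$, the inductive hypothesis forces any factorization $w = u_1 u_2 u_3 u_4$ with nonzero contribution to satisfy ``$|u_1| = 0$ or $|u_1| \geq k$'' and likewise ``$|u_3| = 0$ or $|u_3| \geq k$''; since $|w| \leq k$ this leaves only three possibilities: $u_1 = u_3 = \varnothing$, whose total contribution is $\sum_{w = u_2 u_4} \phi_{u_2}(x)\phi_{u_4}(x^{-1}) = \phi_w(xx^{-1}) = \phi_w(1) = 0$; $u_1 = w$ with the rest empty, contributing $\phi_w(h)$; and $u_3 = w$ with the rest empty, contributing $\phi_w(h^{-1})$.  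Finally $\phi_w(h) + \phi_w(h^{-1}) = 0$, because expanding $0 = \phi_w(1) = \phi_w(h h^{-1})$ and killing the intermediate terms with the inductive hypothesis (legitimate since there $1 \leq |u| \leq |w|-1 \leq k-1$) leaves exactly $\phi_w(h) + \phi_w(h^{-1})$.  Hence $\phi_w([h,x]) = 0$, which completes the induction.

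The one genuinely delicate point --- and what I would flag as the main obstacle --- is that for $h \in \LCS{k}{F}$ the top coefficient $\phi_w(h)$ with $|w| = k$ need \emph{not} vanish, so it cannot be discarded term by term; the argument goes through only because in the commutator expansion this term always comes paired with $\phi_w(h^{-1})$, and the pair cancels via the $hh^{-1}$ relation.  The rest is bookkeeping: tracking which factorizations of $w$ can contribute a nonzero term, and consistently separating the length-zero pieces (on which $\phi_{\varnothing} = \varepsilon$ takes the value $1$, not $0$, on group elements) from the pieces of length $\geq 1$.  Everything else is routine manipulation with the product rule \eqref{eqn:productrule}.
\end{proof}
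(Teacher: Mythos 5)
The paper does not prove this lemma; it simply cites \cite[3.1]{ChenFoxLyndon} and uses the statement as a black box. So there is no in-paper argument to compare against. Your proposal supplies a genuine and, as far as I can tell, correct self-contained proof, so the relevant comparison is with the cited source.

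The argument hinges on the convolution (coproduct) identity $\phi_w(gh)=\sum_{w=uv}\phi_u(g)\phi_v(h)$ for $g,h\in F$, which is equivalent to the statement that $g\mapsto\sum_w \phi_w(g)\,X_w$ is a ring homomorphism into the free power series ring --- i.e.\ it encodes the Magnus embedding without ever writing it down. I checked the induction on $|w|$ that establishes it (peel off the innermost $D_s$, use $D_s(gh)=D_s(g)+gD_s(h)$, push $\varepsilon\circ D_{w'}$ through the resulting $\Z$-linear combination of group elements, and reindex the factorizations) and it closes. The subgroup observation is also correct: for $1\le|w|\le m$, all ``cross terms'' $\phi_u(\cdot)\phi_v(\cdot)$ with $1\le|u|$, $1\le|v|$ have $|u|,|v|\le m-1$, so closure under product and inverse follows from the convolution identity and $\phi_w(1)=0$. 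In the inductive step the contribution bookkeeping is right: for $|w|\le k$ the only factorizations $w=u_1u_2u_3u_4$ surviving the induction hypothesis on $h,h^{-1}\in\gamma_k(F)$ are $u_1=u_3=\varnothing$ (summing to $\phi_w(1)=0$), $u_1=w$ (giving $\phi_w(h)$), and $u_3=w$ (giving $\phi_w(h^{-1})$), and the last two cancel because $0=\phi_w(hh^{-1})=\phi_w(h)+\phi_w(h^{-1})$ once the cross terms are killed by the induction hypothesis. You correctly single out this cancellation as the crux: the top coefficient $\phi_w(h)$ of $h\in\gamma_k(F)$ with $|w|=k$ need not vanish by itself.

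Compared with Chen--Fox--Lyndon, who work with basic commutators and the graded Lie ring associated to the Magnus filtration, your proof is more elementary and shorter, at the cost of not delivering the finer structural information (e.g.\ the freeness of the associated graded Lie algebra) that their machinery also yields. As a proof of exactly the statement the paper needs, it is an efficient route; in a writeup I would make the Leibniz identity a displayed lemma with its own two-line induction rather than a prose sketch, since it is the engine for the whole argument. One small stylistic point: the appeal to \eqref{eqn:exprule} in the base case is unnecessary --- additivity of $\phi_s$ already kills $[F,F]$ without identifying $\phi_s$ as an exponent sum --- but it does no harm.
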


We can now prove Theorem \ref{theorem:fox}.

\begin{proof}[{Proof of Theorem \ref{theorem:fox}}]
Consider $w \in \LCS{k}{F(S)}$ with $w \neq 1$.  Our goal is to show that $k \leq \Length{S}{w}$.  We will
produce a free derivative $D$ whose order is at most $\Length{S}{w}$ such that $\epsilon(D(w)) \neq 0$.
By Lemma \ref{lemma:freelcs}, it will follow that 
$$w \notin \LCS{1+\Length{S}{w}}{F(S)},$$
and hence that $k \leq \Length{S}{w}$.

Write $w = u_1 \cdots u_n$ with $u_i = s_i^{m_i}$ for some $s_i \in S$ and $m_i \in \Z \setminus \{0\}$ for $1 \leq i \leq n$.
Choose this expression such that $s_i \neq s_{i+1}$ for $1 \leq i < n$.  We thus have 
$n \leq \Length{S}{w}$.  Define $D = D_{s_1} \cdots D_{s_n}$.
We must show that $\varepsilon(D(w)) \neq 0$.  In fact, we will show that for all $1 \leq j \leq n$ we have
\begin{align}
&D_{s_{j}} D_{s_{j+1}} \cdots D_{s_{n}}(w) \label{eqn:biggoal}\\
&\quad\quad = \sum_{1 \leq i_{j} < i_{j+1} < \cdots < i_n \leq n} (u_1 \cdots u_{i_j-1})(D_{s_j}(u_{i_j})) \varepsilon(D_{s_{j+1}}(u_{i_{j+1}})) \cdots \varepsilon(D_{s_n}(u_{i_n})). \notag
\end{align}
In particular, the case $j=1$ will yield
$$D(w) = D_{s_1}(u_1) \varepsilon(D_{s_2}(u_2)) \cdots \varepsilon(D_{s_n}(u_n)).$$
Using \eqref{eqn:exprule}, we will then be able to deduce that
$$\varepsilon(D(w)) = \varepsilon(D_{s_1}(u_1)) \cdots \varepsilon(D_{s_n}(u_n)) = m_1 \cdots m_n \neq 0,$$
as desired.

The proof of \eqref{eqn:biggoal} will be by induction on $n-j$.  The base case $n-j=0$ follows from
\eqref{eqn:productrule} and the fact that $\varepsilon(u_i) = 1$ for all $1 \leq i \leq n$.  Now assume
that $n-j > 0$ and that \eqref{eqn:biggoal} holds for all smaller $n-j$.  Since $s_i \neq s_{i+1}$ for
$1 \leq i < n$, we must have $D_{s_j} D_{s_{j+1}}(u_{j+1}) = 0$.  Using this together with \eqref{eqn:productrule}, our inductive
hypothesis, and the fact that $\varepsilon(u_i) = 1$ for all $1 \leq i \leq n$, we obtain
\begin{align*}
&D_{s_{j}} D_{s_{j+1}} \cdots D_{s_{n}}(w) \\
&\quad\quad = D_{s_j}(\sum_{1 \leq i_{j+1} < \cdots < i_n \leq n} (u_1 \cdots u_{i_{j+1}-1})(D_{s_{j+1}}(u_{i_{j+1}})) \varepsilon(D_{s_{j+2}}(u_{i_{j+2}})) \cdots \varepsilon(D_{s_n}(u_{i_n})) \\
&\quad\quad = \sum_{1 \leq i_{j+1} < \cdots < i_n \leq n} (\sum_{i=1}^{i_{j+1}-1} (u_1 \cdots u_{i-1})(D_{s_j}(u_i)) \varepsilon(D_{s_{j+1}}(u_{i_{j+1}})) \cdots \varepsilon(D_{s_n}(u_{i_n}))) \\
&\quad\quad = \sum_{1 \leq i_{j} < i_{j+1} < \cdots < i_n \leq n} (u_1 \cdots u_{i_j-1})(D_{s_j}(u_{i_j})) \varepsilon(D_{s_{j+1}}(u_{i_{j+1}})) \cdots \varepsilon(D_{s_n}(u_{i_n})),
\end{align*}
and we are done.
\end{proof}

\noindent
\begin{tabular*}{\linewidth}[t]{@{}p{\widthof{E-mail: {\tt justinm@math.uchicago.edu}}+1in}@{}p{\linewidth - \widthof{E-mail: {\tt justinm@math.uchicago.edu}}-1in}@{}}
{\raggedright 
Justin Malestein\\
Department of Mathematics\\
University of Chicago\\
5734 University Avenue\\
Chicago, IL 60637-1514\\
E-mail: {\tt justinm@math.uchicago.edu}} 
& 
{\raggedright
Andrew Putman\\
Department of Mathematics\\
MIT, 2-306 \\
77 Massachusetts Avenue \\
Cambridge, MA 02139-4307 \\
E-mail: {\tt andyp@math.mit.edu}}
\end{tabular*}

\end{document}